\newcommand{\ignore}[1]{ }
\newcommand{\pDn}{\mathcal{D}_{\mathcal{W},p}^n} %
\newcommand{\pDi}{\mathcal{D}_{\mathcal{W},p}^{< \infty}} %
\newcommand{\iDn}{\mathcal{D}_{\mathcal{B}}^n} %
\newcommand{\iDi}{\mathcal{D}_{\mathcal{B}}^{< \infty}} %
\newcommand{\D}[2]{\mathcal{D}_{{#1}}^{{#2}}} %
\theoremstyle{plain}
\newtheorem{Pocz}{Poczatek}[section]
\newtheorem{Proposition}[Pocz]{Proposition}
\newtheorem{Theorem}[Pocz]{Theorem}
\newtheorem{Corollary}[Pocz]{Corollary}
\newtheorem{Lemma}[Pocz]{Lemma}
\theoremstyle{definition}
\newtheorem{Definition}[Pocz]{Definition}
\newtheorem{Remark}[Pocz]{Remark}
\newtheorem*{theorem*}{Theorem}
\DeclareMathOperator*{\diam}{diam}
\def\RR{{\mathbb R}}
\def\ZZ{{\mathbb Z}}
\def\NN{{\mathbb N}}
\def\f{{\varphi}}
\def\UU{{\mathcal{U}}}
\def\VV{{\mathcal{V}}}
\def\WW{{\mathcal{W}}}
\def\BB{{\mathcal{B}}}
\def\asdim{\mathrm{asdim}}
\def\dim{\mathrm{dim}}
\def\dim{\mathrm{dim}}
\def\diam{\mathrm{diam}}
\numberwithin{equation}{section}
\author{Atish ~ Mitra}
\address{Montana Tech}
\email{atish.mitra@gmail.com}
\author{\v Ziga Virk}
\address{University of Ljubljana}
\email{ziga.virk@fri.uni-lj.si}
\title[The Space of Persistence Diagrams on $n$ Points Coarsely Embeds into Hilbert Space]%
  {The Space of Persistence Diagrams on $n$ Points Coarsely Embeds into Hilbert Space}
\thanks{This research  was partially supported by a bilateral grant BI-US/18-20-060 of ARRS. The first named author wishes to thank the Institute of Mathematics, Physics and Mechanics, and the University of Ljubljana, for their hospitality during his visit. The second named author was supported by Slovenian Research Agency grants N1-0114, P1-0292, J1-8131, and N1-0064.
The authors would like to thank the referee for valuable comments.}
\date{ \today
}
\keywords{}
\subjclass[2000]{Primary 54F45, 46C05; Secondary 55M10}
\begin{document}

\maketitle

\begin{abstract}
We prove that the space of persistence diagrams on $n$ points (with the bottleneck or a Wasserstein distance) coarsely embeds into Hilbert space by showing it is of asymptotic dimension $2n$. Such an embedding enables utilisation of Hilbert space techniques on the space of persistence diagrams. We also prove that when the number of points is not bounded, the corresponding spaces of persistence diagrams do not have finite asymptotic dimension. Furthermore, in the case of the bottleneck distance, the corresponding space does not coarsely embed into Hilbert space.
\end{abstract}


\section{Introduction}

Persistent homology is a version of homology encompassing  multiscale information about the underlying space. In the classical setting it produces a representation called the persistence diagram. This presentation has two important properties: it is planar (and hence visually easy to analyze) and stable with respect to the input, when an appropriate metric (the Bottleneck or the Wasserstein metrics) is used on the space of persistence diagrams.  (See \cite{Edels} for details) These two properties have played an important role in recent development of persistence in both applied and theoretical setting.  However,  the tools of statistics and machine learning usually rely on the structure of a Hilbert space, so a  question about the embedding of spaces of persistence diagrams arises naturally from applied perspective. Any embedding of this sort would provide an important link between topological data analysis and statistical tools.

Such embeddings have been considered before with mostly negative results. Roughly speaking, for certain spaces of persistence diagrams  there are no isometric \cite{Turner}, bilipshitz \cite{Bauer} or coarse \cite{Bub} (see Remark \ref{RemBub})  embeddings  into a Hilbert space (for a precise statements consult the mentioned papers). 
Basic properties of spaces of persistence diagrams (and why they are not a Hilbert space) have been established in a number of papers including \cite{Mil}, \cite{Bub0}. 

In this paper we consider coarse embeddings (i.e., approximate embeddings with a bound on the size of discontinuities) of certain spaces of persistence diagrams into the Hilbert space. The above-mentioned results suggest that positive embedding results are to be expected for coarse embeddings above all. The ideas of coarse geometry (also called asymptotic topology) were originally motivated by geometric group theory and works of Gromov \cite{Gro}. The field itself became even more active after Yu \cite{Yu} showed that finite asymptotic dimension, and more generally coarse embeddings into Hilbert space, provide sufficient conditions in the context of the Novikov conjecture. Consequently, a broad study of coarse embeddings and their connection with coarse properties was initiated. 

The main results of this paper are the following:
\begin{description}
 \item[Theorem \ref{asdim eq 2n}] The space of persistence diagrams on $n$ points (with any of the mentioned metrics) is of  asymptotic dimension  $2n$ and hence coarsely embeds into  Hilbert space. 
 
This is the first positive result about embeddings of persistence diagrams.
   
 \item[Theorem \ref{ThmCEH}] The space of persistence diagrams on finitely many points equipped with the bottleneck distance does not coarsely embed into  Hilbert space. 
\end{description}
A notable technical contribution of this paper is a reformulation of metrics on the spaces of persistence diagrams in Section \ref{SectP}. While somehow deviating from the Euclidean intuition, the reformulation provides a shorter definition of metrics and an efficient use \ of tools of coarse geometry, often leading to shorter proofs as (for example) in Section \ref{SectAsdim}. See Remarks \ref{RemDef} and \ref{RemDefFunBottle} for details. When the number of points in persistence diagrams is not bounded, it is easy to see that the underlying space of persistence diagrams is not of finite asymptotic dimension (Corollary \ref{CorAsdimi}).

\textbf{Related work:}  There are many maps (sometimes called kernels) from a space of persistence diagrams to a Hilbert space that are in use today. Some of them are listed  in \cite{Bauer}, where an analysis of bilipschitz embedding is performed. \cite{Bell} shows that the space of persistent diagrams on finitely many points fails to have property A (hence is not of finite asymptotic dimension) in the Wasserstein metrics. A result of \cite{Bub} is closely related to Theorem \ref{ThmCEH} (see Remark \ref{RemBub} for details) and shows that the space of persistent diagrams on countably many points in the bottleneck metric does not coarsely embed into Hilbert space. A further work of Wagner \cite{Wag} shows that the same space equipped with the Wasserstein metric for $p>2$ does not  coarsely embed into Hilbert space either. Computing the asymptotic dimension we rely on a result of \cite{Kas} (Theorem \ref{Kasp}) stating that finite group actions preserve asymptotic dimension. This action is a particular example of coarsely $n$-to-$1$ maps, for which the asymptotic dimension can be controlled \cite{KZ}.  It seems that our computation of asymptotic dimension can be adjusted to simplify the computation in hyperspaces and provide a short proof of \cite{Shu}.

\textbf{Structure of the paper:}
In Section \ref{SectP} we introduce basics on persistence diagrams (including an unorthodox definition) and coarse geometry. In Section \ref{SectAsdim} we calculate the asymptotic dimension of persistence diagrams on $n$ points. In Section \ref{SectNonEmbd} we consider embeddings of arbitrary finite metric spaces into the space of persistence diagrams on finitely many points (with the bottleneck distance) and prove that the later does not coarsely embed into Hilbert space. 

\textbf{Note on this version:}
In an earlier version of the paper, we had claimed  that a direct application  of Kasprowski's result from \cite{Kas} on an appropriately constructed space shows that the asymptotic dimension of the space of persistence diagrams on $n$ points is $2n$. However,  as the space constructed by us was not proper, Kasprowski's theorem (which needs the space to be proper) could not be directly used there. However, our result (Theorem \ref{asdim eq 2n}) still holds, and we have corrected the proof in this version.

\section{Preliminaries}
\label{SectP}
In this section we introduce notation and technical preliminaries required for our results.

\subsection{Persistence Diagrams}

Persistence diagrams appear as planar visualisations of persistence modules and persistent homology. We will first build up a notation that encompasses most of the interesting cases of spaces of persistent diagrams.

\begin{Definition}\label{Def1}
Introducing preliminary setting we define:
\begin{enumerate}
 \item  metric $d_\infty$ on $\RR^2$ by $d_\infty((x_1, x_2),(y_1,y_2))=\max \{|x_1-y_1|,|x_2-y_2|\}$;
 \item $\D{}{1}=T \cup \{\Delta\} $ where $\Delta \notin T=\{(x_1,x_2)\in \RR^2 \mid x_2 > x_1 \geq 0\} $;
 \item semi-metric $\delta$ on $\D{}{1}$ as an extension of $d_\infty|_{T}$ on $T$ by defining $\delta((x_1,x_2),\Delta)= (x_2-x_1)/2$.
\end{enumerate}
\end{Definition}

\begin{Remark}
 In Definition \ref{Def1} we could replace $T$ by $\RR^2$ or $\{(x_1,x_2)\in \RR^2 \mid x_2 > x_1\} $. We choose to opt for the current definition as it seems more standard. All results and arguments mentioned in this paper hold for the other cases as well.
 
 Point $\Delta$ represents the diagonal $\{(x_1,x_2)\in \RR^2 \mid x_2 = x_1\} $ in the usual description of persistence diagrams and $\delta((x_1,x_2),\Delta)$ is actually the $d_\infty$ distance from $(x_1,x_2)$ to the diagonal. We find it technically easier to do analysis of the spaces of persistence diagrams by considering the whole diagonal as one point rather than a collection of infinitely many points, as is usually done in the literature.
\end{Remark}

\begin{Definition}
 \label{Def2}
 Choose $n\in \NN$.  Introducing spaces of persistence diagrams we define:
\begin{enumerate}
\item matching (pairing) to be a bijection between sets. If the sets are the same then the matching is a permutation;
 \item \textbf{the space of persistence diagrams} on at most $n$ points as $\D{}{n}= (\D{}{1})^n/{\mathcal{S}_n}$, where the group of symmetries $\mathcal{S}_n$ acts on the coordinates by permutation, i.e., we identify diagrams $z=(z_1, z_2, \ldots,z_n), z'=(z'_1, z'_2, \ldots,z'_n)\in  (\D{}{1})^n$ iff there exists a matching $\f$ on $\{1,2,\ldots,n\}$ so that $z_i = z'_{\f(i)}$;
 \item  a natural inclusion $\D{}{n} \subset \D{}{n+1}$ by appending point $\Delta$. We will frequently use this inclusion implicitly, for example by identifying diagrams $(a)$ and $(a,\Delta)$. Consequently we can define 
 $\D{}{< \infty}=\bigcup_{n\in \NN}{\D{}{n}}$.
\end{enumerate}
\end{Definition}

\begin{Definition}
 \label{Def3}
Let $n\in \NN$ and $p>1$.
Introducing metrics on the spaces of persistence diagrams we define:
\begin{enumerate}
 \item \textbf{bottleneck distance} $d_{\mathcal{B}}$ on $\D{}{n}$:  for points $z=(z_1, z_2, \ldots,z_n), z'=(z'_1, z'_2, \ldots,z'_n)$ in $\D{}{n}$ define 
 $$
 A(z)=(z_1, z_2, \ldots, z_n, \Delta, \ldots, \Delta)\in  (\D{}{1})^{2n}
 $$
 $$
  A(z')=(z'_1, z'_2, \ldots, z'_n, \Delta, \ldots, \Delta)\in  (\D{}{1})^{2n}
 $$
 i.e., we append $n$ copies of $\Delta$ to each of the diagrams by defining $z_i=z'_i=\Delta, \forall i\in \{n+1, n+2, \ldots, 2n\}$.  The bottleneck distance is defined as 
 $$
 d_{\mathcal{B}}(z,z') = \min_
 {\f \in  {\mathcal{S}_{2n}}} \max_i \delta (z_i, z'_{\f(i)}). 
 $$
 Matching $\f$, for which the minimum above is obtained, is called optimal. See Remark \ref{RemDef} for technical clarifications.
 \item $\iDn=(\D{}{n},d_\mathcal{B})$ and $\iDi=(\D{}{<\infty},d_\mathcal{B})$. Note that $\D{\mathcal{B}}{1}$ is not isometric to $(\D{}{1},\delta)$.
 \item $p$-\textbf{Wasserstein distance} $d_{\mathcal{W},p}$ on $\D{}{n}$:  for points $z=(z_1, z_2, \ldots,z_n), z'=(z'_1, z'_2, \ldots,z'_n)$ in $\D{}{n}$  use the notation of (2) to define  $$
d_{\mathcal{W},p}(z,z') = \min_
 {\f \in  {\mathcal{S}_{2n} }} \Big( \sum_i \delta (z_i, z'_{\f(i)})^p\Big) ^{1/p}. 
 $$
 Matching $\f$, for which the infimum above is obtained, is called optimal. 
 \item $\pDn=(\D{}{n},d_p)$ and $\pDi=(\D{}{<\infty},d_p)$.\end{enumerate}
 Note that the convention of Definition \ref{Def2}(3) implies that metrics $d_{\mathcal{B}},$ and $d_{\mathcal{W},p}$ are well defined even if $z\in \D{}{n}$ and $z'\in \D{}{m}$ for $n \neq m$. For example, if $n>m$ we append  $m-n$ copies of $\Delta$ to $z'$ to compute the mentioned distances.
\end{Definition}

Matching on $\{1,2,\ldots,n\} $ is perfect for  $z=(z_1, z_2, \ldots,z_n), z'=(z'_1, z'_2, \ldots,z'_n)$ in $\D{}{n}$ if the following holds: $z_i = \Delta$ iff $z'_{\f(i)}=\Delta$.

\begin{Remark}
 \label{RemDef}
 In this remark we provide clarifications to Definition \ref{Def3}. The  distances in (1) and (3) in definition \ref{Def3} are defined in a non-standard way. Rather than adding infinitely many points on the diagonal, we append to each diagram with $n$ points only $n$ copies of the diagonal point, which suffice to accommodate the usual matching with diagonal points. It is clear that this definition is a restatement (using action of $S_{2n}$) of the usual definition \cite{Edels} of  distance using partial matching between persistence diagrams, where the unmatched points are matched to the closest distance point of the diagonal. These re-definitions of the distances have the advantage of using only one term instead of the usual  three terms and the inclusion of infinitely many diagonal points. 
\end{Remark}

The importance of the mentioned distances on  spaces of persistence diagrams arises from Stability results (see  \cite{Edels} for an overview of those), which state that in a certain sense, the persistent diagrams vary continuously with respect to the underlying filtrations or datasets.


For the proof of Theorem \ref{asdim eq 2n}, we find it convenient to use an alternate but equivalent way of defining the bottleneck metric. We describe it below.

\begin{Definition}
 \label{DefFunBottle} 
 We first define the bottleneck distance on the space of persistence diagrams on one point, and use it to define the bottleneck distance on the space of persistence diagrams on $n$ points:
 
 \begin{enumerate}
 \item  (\textbf{Bottleneck distance} $d^1_{\mathcal{B}}$ on $\D{}{1}$)
 
  for points $z_1,  z'_1$ in $\D{}{1}$ , the bottleneck distance is defined as 
$$
 \displaystyle{d^1_{\mathcal{B}}(z,z') = \min \{d_\infty(z,z'), \max \{ \delta(z,\Delta), \delta(z',\Delta)\}\}}. 
$$

 \item  (\textbf{Bottleneck distance} $\tilde{d}_{\mathcal{B}}$ on $\D{}{n}$) 
 
 for points $z=(z_1, z_2, \ldots,z_n), z'=(z'_1, z'_2, \ldots,z'_n)$ in $\D{}{n}$, bottleneck distance is defined as 
 $$
 \displaystyle{\tilde{d}_{\mathcal{B}}(z,z') =  \min_{\f \in  {\mathcal{S}_{n}}} \max_{i} \{  d^1_{\mathcal{B}}(z_i,z_{\f(i)}') }. 
 $$

 \end{enumerate}
  \end{Definition}

\begin{Remark}
 \label{RemDefFunBottle}
 In this remark we provide clarifications to Definition \ref{DefFunBottle}.  First we note that the bottleneck distance defined on the space of persistence diagrams on one point matches the definition in \ref{Def3}. To see that  $d_{\mathcal{B}}$ and  $\tilde{d}_{\mathcal{B}}$ define the same distance, we consider the usual definition of  bottleneck distance using partial matchings. As any permutation of the set $\{1,2, \cdots, n\}$ trivially describes a partial map between the two $n$-point persistence diagrams, we only have to show that $\tilde{d}_{\mathcal{B}} \le d_{\mathcal{B}}$ for all partial matches between two $n$-point persistence diagrams. To that end, suppose that the distance between two $n$-point persistence diagrams is   realized by $d_{\mathcal{B}}$ by an optimal partial matching of the two persistence diagrams. By the definition, this realized distance arises either as $d_\infty(z_i, z'_j)$, or as $ \delta(z_i,\Delta)$, or as $ \delta(z'_j,\Delta)$ for some $i,j$. If the distance is realized by $d_\infty(z_i, z'_j)$  there is some $\f \in S_n$ extending the partial matching (see Remark \ref{RemDef}) such that $\f (i)=j$, which (as it comes from an optimal partial match) implies that the distance is $d^1_{\mathcal{B}}(z_i,z'_{\f(i)})$. Once again, as this distance comes from an optimal partial match, this gives us $\tilde{d}_{\mathcal{B}} \le d_{\mathcal{B}}$. If the distance is realized by $ \delta(z_i,\Delta)$ or by $ \delta(z'_j,\Delta)$, a similar argument shows again $\tilde{d}_{\mathcal{B}} \le d_{\mathcal{B}}$.

We note that definition  \ref{DefFunBottle}   allows us to  express the space of persistence diagrams on $n$-points as a natural quotient of the $n$-fold product of the metric space $(\D{}{1},\tilde{d}_\mathcal{B})$ by the finite group $S_n$, which is crucial in  our computation of asymptotic dimension in Theorem \ref{asdim eq 2n}.

\end{Remark}


\subsection{Coarse Geometry}

We will now introduce the basic terms and definitions of coarse geometry that are used in this paper. The first concepts we introduce are  the notions of coarse embedding and coarse equivalence.

\begin{Definition}
 \label{Def4}
Let   $f:X \to Y$ be a function between metric spaces.

\begin{enumerate}
\item $f$ is  is said to be a coarse embedding if for $i=1,2$ there are non-decreasing functions $\rho_i:[0,\infty) \to [0, \infty)$  with $\rho_1(d(x_1,x_2)) \le d(f(x_1),f(x_2)) \le \rho_2(d(x_1,x_2)) $ and with $\lim_{t \to \infty} \rho_1(t) = \infty$.
\item If, in addition, $f$ is coarsely onto  then $f$ is said to be a coarse equivalence. A function $f: X \to Y$ is said to be coarsely onto if there is $D > 0$ such that the $D$-neighborhood of $f(X)$ is all of $Y$ (for every $y \in Y$ there is $x \in X$ such that $d(f(x),y) \le D$). 
\end{enumerate}

\end{Definition}

Next we introduce the concept of asymptotic dimension, which turns out to be the appropriate concept of dimension in coarse geometry.

\begin{Definition}
 \label{Def5}
Let $n$ be a non-negative integer. We say that the asymptotic dimension of a metric space $X$ is less than or equal to $n$ ($\asdim X \le n$) iff for every $R > 0$ the space $X$ can be expressed as the union of $n+1$ subsets $X_i$, with each $X_i$ being an union of uniformly bounded $R$-disjoint sets.

\end{Definition}

Asymptotic dimension is a coarse invariant, i.e., coarsely equivalent spaces have the same asymptotic dimension. For a self contained survey of asymptotic dimension see \cite{BD}.

As an example, to see that $\asdim (\mathbb{R}) \le 1$ we need to  (for each $R >0$) express $\mathbb{R}$ as the union of two (2) families of uniformly  bounded $R$-disjoint sets. See Figure \ref{FigAtish} for a decomposition of $\mathbb{R}$ into two such families. One can use similar "brick decompositions" to get upper bounds of asymptotic dimension of $\mathbb{R}^n$ (for any $n$). In general, asymptotic dimension behaves as expected in terms of unions and products as the following statement demonstrates.

\begin{figure}
\begin{tikzpicture}[scale=.9]

\draw[very thick] (-2,0)--(-4,0);
\draw (0,0)--(-2,0);
\draw[very thick] (0,0)--(2,0);
\draw (2,0)--(4,0);
\draw[very thick] (4,0)--(6,0);
\draw (6,0)--(8,0);
\node at (8.5,0) {$\ldots$};
\node at (-4.5,0) {$\ldots$};
\end{tikzpicture}
\caption{$\asdim \mathbb{R} \le 1$}
\label{FigAtish}
\end{figure}

\begin{Theorem}
\label{ThmUP}
Suppose $X$ and $Y$ are subspaces of a  metric space $Z$. Then the following hold:
\begin{description}
 \item[Union Theorem] $\asdim (X \cup Y) = \max\{\asdim X, \asdim Y\}$ \cite[Corollary 26]{BD}.
  \item[Product Theorem] $\asdim (X \times Y) \leq \asdim X + \asdim Y$ \cite[Theorem 32]{BD}.
\end{description}
\end{Theorem}

Getting lower bounds of asymptotic dimension usually need special techniques such as homological methods. However, here we will use Lemma \ref{embedlargecubes} as a direct way of getting lower bounds on asymptotic dimension of a space. As this lemma uses the notion of topological dimension, we give a definition of topological dimension below for completeness (for more details see \cite{Engel},\cite{BD}). We recall that the multiplicity of a cover of a metric space is the maximum number of elements of the cover that can intersect. The second characterization of topological dimension given below is usually called metric dimension, which coincides with topological dimension for compact metric spaces. Given $\epsilon>0$ we say that a collection of subsets of a metric space is $\epsilon$-small, if the diameter of each of the sets is at most $\epsilon$.

\begin{Definition}
 \label{Def5.5}
\text{}Definitions of topological dimension (see \cite{Engel} for details):
  \begin{enumerate}
 \item Let $n$ be a non-negative integer. We say that the topological  dimension of a topological  space $X$ is less than or equal to $n$ ($\dim X \le n$) iff for every open cover $\UU$ of the space $X$ there is an open cover $\VV$ of $X$ of multiplicity less than or equal to $n+1$.
 \item If $X$ is a compact metric space, the above definition is equivalent to the following: $\dim X \le n$ iff for every $\epsilon>0$, $X$ has an  $\epsilon$-small open cover of multiplicity $n+1$. Another equivalent definition is the following:  $\dim X \le n$ iff for every $\epsilon>0$,  $X$ has an open  cover consisting of $n+1$ families, such that each family consists of disjoint $\epsilon$-small open sets.
 \end{enumerate}
\end{Definition}

Throughout the paper we will use various metrics on the product of spaces: if $(X,d)$ is a metric space, $n\in \NN$, and $p \ge 1$, we can  define metrics on 
 $X^n$ by 
$$
d_\infty (z,z') =\max_{i} d (z_i,z'_i), \qquad d_p (z,z') =\Big(\sum_{i=1}^{n} d (z_i,z'_i)^p\Big) ^{1/p}
$$
for for points $z=(z_1, z_2, \ldots, z_n), z'=(z'_1, z'_2, \ldots, z'_n) \in X^n$.
We will often refer to the $d_\infty$ metric as the $\max$ metric.

\begin{Lemma}\label{embedlargecubes}
Let $p>1$. If for every $R>0$ there is an isometric embedding of $([0,R]^m,d_\infty)$ or $([0,R]^m,d_p)$ in $X$, then   $\asdim X \ge m$.
\end{Lemma}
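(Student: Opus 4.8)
The plan is to argue by contradiction: I would assume $\asdim X \le m-1$ and extract from a large cube a covering that the classical Lebesgue covering lemma forbids. The first step is to apply Definition \ref{Def5} at the fixed scale $R=1$. If $\asdim X \le m-1$, then $X = X_0 \cup \cdots \cup X_{m-1}$ where each $X_i$ is a union of a family of $1$-disjoint sets, all of diameter at most some constant $D$ (the uniform bound, which depends only on $X$ once the scale is fixed). This constant $D$ is the only feature of $X$ that I retain.

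Next I would invoke the hypothesis with $R$ chosen to be any number strictly greater than $D$: fix an isometric embedding $\iota$ into $X$ of $([0,R]^m,d_\infty)$ or of $([0,R]^m,d_p)$. Pulling the pieces of the $X_i$ back along $\iota$ yields a cover of the cube $[0,R]^m$ by the sets $C_{ij} := \iota^{-1}(X_{ij})$, where for each colour $i$ the sets $X_{ij}$ are the $1$-disjoint pieces composing $X_i$. Since $\iota$ is isometric, for each fixed $i$ the $C_{ij}$ are still pairwise $1$-disjoint and each has diameter at most $D$; because $[0,R]^m$ is compact (hence totally bounded) only finitely many of them are nonempty, so after discarding empty sets and passing to closures I obtain a \emph{finite closed} cover of $[0,R]^m$ with two crucial properties. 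First, each point of the cube lies in at most one member of the cover per colour and there are $m$ colours, so the cover has multiplicity at most $m$. Second, in both the $d_\infty$ and the $d_p$ metric the distance between a pair of opposite faces $\{x_k=0\}$ and $\{x_k=R\}$ equals $R>D$, so no member of the cover, having diameter at most $D$, can meet two opposite faces.

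The final step is to invoke the Lebesgue covering lemma for the cube: any finite closed cover of $[0,R]^m$ in which no member meets a pair of opposite faces must contain a point lying in at least $m+1$ of its members. This contradicts the multiplicity bound $m$ obtained above, so the assumption $\asdim X \le m-1$ is untenable and $\asdim X \ge m$, as claimed. (The same argument works verbatim for both the $d_\infty$ and the $d_p$ hypotheses, since all that is used about the metric is that opposite faces of $[0,R]^m$ are at distance $R$.)

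I expect the only delicate point to be the bookkeeping in the middle step, namely checking that the pulled-back cover is genuinely finite and may be taken to consist of closed sets, so that the classical Lebesgue lemma applies without modification; this is exactly what the $1$-disjointness of the pieces within each colour class, combined with compactness of the cube, provides. Everything else — the multiplicity count, the fact that the diameter bound survives pullback by an isometry, and the computation that opposite faces of the cube are $R$ apart in either metric — is routine.
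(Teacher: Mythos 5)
Your proof is correct and follows essentially the same route as the paper: assume $\asdim X\le m-1$, pull the $m$-coloured uniformly bounded decomposition back along the isometric embedding of a sufficiently large cube, and contradict the $m$-dimensionality of the cube via a multiplicity-$\le m$ cover of small mesh. The only cosmetic difference is that you invoke the Lebesgue covering lemma for finite closed covers not meeting opposite faces, while the paper rescales to $[0,1]^m$ and cites $\dim[0,1]^m=m$ for small open covers --- these are the same dimension-theoretic fact in two standard guises.
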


\begin{proof}
Assuming (towards a contradiction) that $\asdim X \le m-1$, we can get  a cover $\mathcal U$ of $X$ by $m$  families of uniformly $R$-bounded $1$-disjoint sets, for some $R>0$. By hypothesis, given any $1>\epsilon>0$, space $[0, \frac{R+1}{\epsilon}]^m$ (with either the $d_p$ or $d_\infty$ metric) isometrically embeds in $X$. Restricting $\mathcal U$ to (an isometrically embedded)  $[0, \frac{R+1}{\epsilon}]^m$ in $X$ we obtain a cover of $[0, \frac{R+1}{\epsilon}]^m$ by $m$  families of uniformly $R$-bounded $1$-disjoint sets. Replacing each set of this cover by its open neighborhood of radius $1/2$ and maintaining the structure of the cover,  we obtain a cover of $[0, \frac{R+1}{\epsilon}]^m$ by $m$  families of uniformly $(R+1)$-bounded disjoint open sets. Scaling $[0, \frac{R+1}{\epsilon}]^m$ and the obtained cover with a scale factor of $\frac{\epsilon}{R+1}$,  we get an $\epsilon$-small open cover of $[0,1]^m$ consisting of $m$ families of disjoint open sets. This contradicts the fact that  the topological dimension of $[0,1]^m$ is $m$ in $d_p$ and $d_\infty$. \end{proof}

\begin{Corollary}
 \label{CorAsdimi} For each $p>1$ spaces $\iDi$ and $\pDi$ are not of finite asymptotic dimension.
\end{Corollary}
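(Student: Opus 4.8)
The plan is to derive this from Lemma~\ref{embedlargecubes}. It suffices to show that for every $m\in\NN$ and every $R>0$ there is an isometric embedding of $([0,R]^m,d_\infty)$ into $\iDi$ and an isometric embedding of $([0,R]^m,d_p)$ into $\pDi$; since $m$ is arbitrary, Lemma~\ref{embedlargecubes} then forces $\asdim\iDi\ge m$ and $\asdim\pDi\ge m$ for all $m$, so neither space has finite asymptotic dimension.

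To build the embedding, fix $m$ and $R$ and pick large constants $L$ and $H$ (for the bottleneck case $L>R$ and $H>2R$ will do; for the $p$-Wasserstein case I would take $L,H>2R\,m^{1/p}$). I would then define $F\colon[0,R]^m\to\D{}{m}\subseteq\D{}{<\infty}$ by
$$
F(t_1,\dots,t_m)=\bigl\{\,(Li,\ Li+H+t_i)\ :\ 1\le i\le m\,\bigr\}.
$$
Each point listed here lies in $T$ (its first coordinate $Li$ is positive and strictly smaller than its second coordinate), and the $m$ points have pairwise distinct first coordinates, so $F(t)$ is a well-defined element of $\D{}{m}$ and $F$ is injective.

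The core computation is the distance between $F(t)$ and $F(t')$. Write $P_i=(Li,Li+H+t_i)$ and $P'_i=(Li,Li+H+t'_i)$, and append $m$ copies of $\Delta$ to each diagram as in Definition~\ref{Def3}. The relevant matching costs are $\delta(P_i,P'_i)=|t_i-t'_i|\le R$, while $\delta(P_i,P'_j)\ge L$ for $i\ne j$ (the first coordinates differ by $L|i-j|$) and $\delta(P_i,\Delta)=(H+t_i)/2\ge H/2$. With the constants chosen above, any matching that sends some $P_i$ to a diagonal point or to a $P'_j$ with $j\ne i$ costs strictly more than the matching pairing $P_i$ with $P'_i$ and $\Delta$ with $\Delta$; hence the latter is optimal, and
$$
d_{\mathcal B}\bigl(F(t),F(t')\bigr)=\max_i|t_i-t'_i|,\qquad d_{\mathcal W,p}\bigl(F(t),F(t')\bigr)=\Bigl(\sum_i|t_i-t'_i|^p\Bigr)^{1/p}.
$$
Thus $F$ is the desired isometric embedding in each case, and the corollary follows.

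The only delicate point — and it is routine — is the bookkeeping in the last step: one must verify that a single large scale can be chosen so that every ``cross'' matching $P_i\leftrightarrow P'_j$ and every ``to-the-diagonal'' matching $P_i\leftrightarrow\Delta$ is more expensive than the identity matching, uniformly over all $t,t'\in[0,R]^m$. Once the optimal matching is pinned to the identity, the two isometry statements are immediate from the definitions of $d_{\mathcal B}$ and $d_{\mathcal W,p}$.
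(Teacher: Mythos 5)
Your proposal is correct and follows essentially the same route as the paper: both embed arbitrarily large cubes $[0,R]^m$ isometrically into $\D{}{m}\subset\D{}{<\infty}$ by placing the $m$ points far apart and far from the diagonal so that the identity matching is optimal, and then invoke Lemma~\ref{embedlargecubes} for every $m$. The paper just writes down one explicit such map (with spacing $2R$) and omits the matching verification that you spell out.
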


\begin{proof}
For each $R>0$ and $n\in \NN$ we can isometrically embed $([0,R]^n, d_\infty)$ or $([0,R]^n, d_p)$  into $\D{<\infty}{n}$ or $\D{p}{<\infty}$ respectively by mapping $(x_1, x_2, \ldots, x_n)\mapsto (2R, 4R +x_1, 4R, 6R + x_2, \ldots, 2nR, 2nR + 2R + x_n)$. The conclusion follows by Lemma 
 \ref{embedlargecubes}.
\end{proof}

Finiteness of asymptotic dimension is closely related to embeddability questions - as the following well known result shows.

\begin{Theorem} \label{FinasdimImpliesCEH}
 [Roe, \cite{Roe} Example 11.5]
A metric space of finite asymptotic dimension coarsely embeds in Hilbert space.
\end{Theorem}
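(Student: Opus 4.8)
The plan is to prove this directly: assuming $\asdim X \le n$, build a coarse embedding $F \colon X \to \mathcal{H}$ into the Hilbert space $\mathcal{H} = \bigoplus_{k \in \NN} \ell^2(\mathcal{V}_k)$, assembled from countably many normalized partitions of unity taken at geometrically growing scales. First I would fix a basepoint $x_0 \in X$ and scales $R_k = 2^k$. For each $k$, Definition \ref{Def5} supplies a cover of $X$ by $n+1$ families of uniformly bounded $R_k$-disjoint sets; replacing every member of each family by its open $(R_k/3)$-neighborhood produces a cover $\mathcal{V}_k$ of $X$ of multiplicity $\le n+1$ (using the $R_k$-disjointness), in which every ball $B(x, R_k/3)$ lies in some member (so the Lebesgue number is at least $R_k/3$), and with a finite diameter bound $b_k$. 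Putting $\phi_{k,V}(x) = \dist(x, X \setminus V)$, the sum $\Phi_k(x) = \sum_{V \in \mathcal{V}_k} \phi_{k,V}(x)$ is bounded below by $R_k/3 > 0$, so $\psi_{k,V} = \phi_{k,V}/\Phi_k$ is a genuine partition of unity subordinate to $\mathcal{V}_k$.

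Next I would set $f_k \colon X \to \ell^2(\mathcal{V}_k)$, $f_k(x) = \big(\sqrt{\psi_{k,V}(x)}\big)_{V \in \mathcal{V}_k}$, so that $\|f_k(x)\|_2 = 1$ and all coordinates are nonnegative. Everything rests on two estimates. Upper estimate: each $\phi_{k,V}$ is $1$-Lipschitz, for a fixed pair $x,y$ at most $2(n+1)$ coordinates of $\phi_k(\cdot)$ are nonzero, and normalizing a nonnegative $\ell^1$-vector is Lipschitz in the $\ell^1$-norm with constant at most $2/\Phi_k$; combined with $(\sqrt{s}-\sqrt{t})^2 \le |s-t|$ this yields $\|f_k(x) - f_k(y)\|_2^2 \le C_n\, d(x,y)/R_k$ for a constant $C_n$ depending only on $n$. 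Saturation estimate: if $d(x,y) > b_k$ then no member of $\mathcal{V}_k$ contains both $x$ and $y$, so $f_k(x)$ and $f_k(y)$ have disjoint supports and $\|f_k(x)-f_k(y)\|_2^2 = 2$. I would then define $F(x) = \big(f_k(x) - f_k(x_0)\big)_{k}$; since each summand is $\le \min\{2,\, C_n d(x,x_0)/2^k\}$, only about $\log_2 d(x,x_0)$ of them exceed any fixed threshold and the rest form a convergent geometric tail, so $F(x) \in \mathcal{H}$.

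Finally I would verify Definition \ref{Def4}(1). For the upper gauge, $\|F(x) - F(y)\|^2 = \sum_k \|f_k(x) - f_k(y)\|_2^2 \le \sum_k \min\{2,\, C_n d(x,y)/2^k\} =: \rho_2(d(x,y))^2$, a finite, nondecreasing function of $d(x,y)$ (growing only logarithmically). For the lower gauge, discarding all terms with $b_k \ge d(x,y)$ leaves $\|F(x) - F(y)\|^2 \ge 2\,\#\{k : b_k < d(x,y)\} =: \rho_1(d(x,y))^2$; since $(b_k)$ is a fixed sequence of finite reals, this count is nondecreasing in $d(x,y)$ and tends to $\infty$, so $\rho_1$ is a legitimate lower gauge, and $F$ is a coarse embedding.

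The hard part is not any individual inequality but arranging for $F$ to land in a genuine Hilbert space at all: the natural building blocks $f_k$ are unit vectors, so their direct sum never converges, while rescaling them to force convergence would simultaneously destroy the growth of $\rho_1$. The resolution — geometric scales $R_k = 2^k$ together with recentering at a basepoint, so that for any given pair only $O(\log d(x,y))$ of the "saturated" differences of norm $\sqrt{2}$ can occur — is the real content; once that bookkeeping is in place, the Lipschitz control of the normalized partition of unity and the verification of the two gauges are routine. (Alternatively one could route through the implications "finite asymptotic dimension $\Rightarrow$ property A $\Rightarrow$ coarse embeddability", but the construction above is self-contained.)
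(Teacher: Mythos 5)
Your construction is correct, and since the paper offers no proof of this statement (it is quoted from Roe), your argument is essentially the standard one behind the cited result: partitions of unity subordinate to uniformly bounded covers of multiplicity $n+1$ at dyadic scales $2^k$ with Lebesgue number comparable to $2^k$, square roots giving unit vectors in $\ell^2$, and recentering at a basepoint so the differences sum in the Hilbert direct sum; the property~A detour you mention is the route Roe's book actually takes, but your direct assembly is a legitimate shortcut. One cosmetic point: to keep $\rho_1$ real-valued you should fix diameter bounds with $b_k\to\infty$ (e.g.\ replace $b_k$ by $\max\{b_k,2^k\}$, which any upper bound allows), so that $\#\{k: b_k<t\}$ is finite for every $t$ while still tending to infinity.
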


 As an example of the importance of Hilbert space  embeddability in coarse geometry, see \cite{Yu}. One of our interests in the current paper is questions of embeddability: whether there there exist embeddings of interesting spaces in the spaces of persistence diagrams, and whether the spaces of persistence diagrams themselves can be embedded in interesting spaces.

In Section \ref{SectAsdim} we will use the following result about behavior of asymptotic dimension under finite group actions, to get an exact value of $\asdim \iDn$. When a finite group $F$ acts by isometries on a metric space $X$, we will define the metric on $X/F$ by $d_F(Fx,Fx')=\min_{f \in F} d(x,fx')$. 

\begin{Theorem} \label{Kasp}
 [Kasprowski, \cite{Kas} Theorem 1.1]
Let $X$ be a proper metric space and $F$ be a finite group acting on $X$ by isometries. Then $X/F$ has the same asymptotic dimension as that of $X$.
\end{Theorem}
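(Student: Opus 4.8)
\medskip
\noindent\emph{Sketch of a proof (the statement is quoted from \cite{Kas}; here is how one could approach it).}

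The plan is to deduce the equality from the two inequalities $\asdim(X/F)\le\asdim X$ and $\asdim X\le\asdim(X/F)$, using throughout only that the quotient map $\pi\colon X\to X/F$ is surjective and $1$-Lipschitz (indeed $d_F(\pi x,\pi x')=\min_f d(x,fx')\le d(x,x')$), has fibres of cardinality at most $|F|$, and sends the preimage of a set of diameter $\le B$ into a union of at most $|F|$ balls of radius $B$ (fix $x_0\in\pi^{-1}(v_0)$ with $v_0\in V$; then $\pi^{-1}(V)\subseteq\bigcup_{f\in F}\bar B_B(fx_0)$). Properness of $X$ plays essentially no role in what follows.

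For $\asdim X\le\asdim(X/F)=:n$ I would lift covers. Fix a scale $s$ and choose a cover $\mathcal V=\bigsqcup_{i=0}^n\mathcal V_i$ of $X/F$ witnessing $\asdim(X/F)\le n$ at scale $s$, with each $\mathcal V_i$ an $s$-disjoint family of sets of diameter $\le B=B(s)$. For $V\in\mathcal V_i$, partition $\pi^{-1}(V)$ into its \emph{$s$-components} (classes of the relation ``joined by a finite chain with steps $\le s$ inside $\pi^{-1}(V)$''), and let $\mathcal W_i$ gather all these components. Then $\bigsqcup_i\mathcal W_i$ covers $X$; each $\mathcal W_i$ is $s$-disjoint (two $s$-components from the same $V$ are more than $s$ apart by construction, and two from distinct $V,V'\in\mathcal V_i$ are $\ge s$ apart since $d(x,x')\ge d_F(\pi x,\pi x')\ge s$); and each $s$-component is uniformly bounded, because $\pi^{-1}(V)$ lies in a union of $\le|F|$ balls of radius $B$, an $s$-chain cannot pass between two such balls with centres more than $2B+s$ apart, and therefore each component is confined to one cluster of at most $|F|$ successively $(2B+s)$-close balls, of diameter $\le|F|(2B+s)$. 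Hence $\asdim X\le n$.

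For $\asdim(X/F)\le\asdim X=:n$ --- the substantial direction --- pushing a cover of $X$ forward by $\pi$ does not work directly: a translate $fU'$ of one member of a family can come close to another member, and repairing this by adjoining all translates multiplies the multiplicity by $|F|$, while merging colliding members can create unbounded sets (in examples the ``collision graph'' is connected). The plan is instead to establish the following, which immediately closes the gap: \emph{for every scale $R$, $X$ has a cover $\mathcal U=\bigsqcup_{i=0}^n\mathcal U_i$ realizing $\asdim X\le n$ at scale $R$ that is $F$-invariant} ($f\cdot\mathcal U_i=\mathcal U_i$ for all $f,i$); equivalently, an $F$-equivariant, uniformly cobounded, large-scale-controlled map $X\to|K|$ to an $n$-dimensional simplicial complex carrying a simplicial $F$-action. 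Granting this, $\mathcal V_i=\{\pi(U):U\in\mathcal U_i\}$ works: for distinct $U,U'\in\mathcal U_i$ either some $fU'$ equals $U$ and then $\pi(U)=\pi(U')$, or every $fU'$ is a distinct member of $\mathcal U_i$ and hence $d_F(\pi U,\pi U')=\min_f d(U,fU')\ge R$; so the pushed-forward families are $R$-disjoint, uniformly bounded, and cover $X/F$. (In the simplicial-complex formulation one subdivides so that the $F$-action becomes admissible, takes $|K|/F$ to be the realization of the quotient complex, of dimension $\le n$, and checks the descended map stays uniformly cobounded, again because $\pi$ collapses $F$-orbits.)

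The main obstacle is precisely the construction of that $F$-invariant cover (equivalently, that $F$-equivariant map to an $n$-complex) realizing $\asdim X\le n$: symmetrizing a given cover is too crude and merging collisions is too fragile, so the construction must be sensitive to the non-free part of the action --- the points with nontrivial stabilizer, where the sheet structure of $\pi$ degenerates --- whereas on the free part $\pi$ is an honest $|F|$-sheeted covering and an equivariant construction is comparatively routine. Carrying this out uniformly across all scales is the step I expect to be hardest, and is the technical core of \cite{Kas}.
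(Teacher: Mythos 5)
This statement is not proved in the paper at all: it is quoted verbatim as Kasprowski's Theorem 1.1 from \cite{Kas}, so there is no internal argument to compare yours against. Judged as a proof attempt, your sketch establishes only half of the theorem. The lifting argument for $\asdim X\le\asdim(X/F)$ is sound: $\pi$ is $1$-Lipschitz, $\pi^{-1}$ of a $B$-bounded set sits in at most $|F|$ balls of radius $B$ around an orbit, and the $s$-component decomposition gives $s$-disjoint families of sets of diameter at most roughly $|F|(2B+s)$. Likewise your reduction of the other inequality is valid as a reduction: if for every scale $R$ there is an $F$-invariant cover $\mathcal U=\bigsqcup_{i=0}^n\mathcal U_i$ by uniformly bounded $R$-disjoint families, then the pushed-forward families are $R$-disjoint in $d_F$ and witness $\asdim(X/F)\le n$.

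The genuine gap is that the existence of such $F$-invariant (equivariant) covers realizing $\asdim X\le n$ at every scale is precisely the substantial content of the theorem, and you explicitly leave it unproved (``the technical core of \cite{Kas}''). Naive symmetrization or merging, as you yourself note, fails, and nothing in your sketch handles points with nontrivial (coarse) stabilizers, which is where the difficulty lives; Kasprowski's argument has to work around exactly this, via a careful induction over the subgroups of $F$ and the sets of points whose orbits are $R$-small for a given subgroup, rather than by simply asserting an equivariant cover exists. Relatedly, your claim that properness ``plays essentially no role'' is unsupported: it is indeed unused in the easy direction, but whether and how it enters the hard direction is part of what you have not carried out, and the theorem as stated (and as used in this paper, where $S\subset(\mathcal D^1)^{2n}$ is proper) includes that hypothesis. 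So the proposal should be regarded as a correct proof of one inequality plus a plausible strategy, not a proof of the stated theorem; for the purposes of this paper the correct move is simply to cite \cite{Kas}, as the text does.
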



 Given a sequence of bounded metric spaces $(X_n,d_n)$ we can define a metric $d$ on their  disjoint union $\bigsqcup_n X_n$ such that $d$ restricted to $X_n$ is $d_n$, and for $i \ne j$ and $x_i \in X_i$, $x_j \in X_j$, $d(x_i,x_j)> \max\{\text{diam}(X_i), \text{diam}(X_j)\}$. Any two such metrics on  $\bigsqcup_n X_n$ are coarsely equivalent and the resulting space is called \textbf{coarse disjoint union} (it appears, for example, in Theorem  \ref{ThmDra}).

In Theorem  \ref{ThmDra} below we will consider $\ZZ_k=\{[0], \cdots, [k-1]\}$, the set of integers modulo $k \in \NN$, as a metric space. The metric is defined as $d([i],[j])=\min\{|i'-j'|: [i'-j']=[i-j]\}$. This is the usual word metric on the finitely generated group $\ZZ_k$.

As mentioned above, the question of coarse embeddability (and non-embeddability) of metric  spaces into Hilbert space have been studied extensively. In Section \ref{SectNonEmbd} we will use the following result to show that $\iDi$ does not coarsely embed in a Hilbert space.

\begin{Theorem} \label{ThmDra}
 [Dranishnikov et al, \cite{DranGLY} Proposition 6.3]
 
Consider $(\mathbb{Z}_n)^m$ as a metric space, where  the integers mod $n$ has the word metric and the $m$-fold product has the max metric $d_{\infty}$. Let $S$ be the disjoint union of $(\mathbb{Z}_n)^m$ (for all $m,n \ge 1$). We define a metric $d$ on $S$ whose restriction to each $(\mathbb{Z}_n)^m$ coincides with its existing metric, and such that  $d(x,y) > m+n+m'+n'$ for $x \in (\mathbb{Z}_n)^m$ and $y \in (\mathbb{Z}_{n'})^{m'}$. Then $S$ does not coarsely embed in a Hilbert space.

\end{Theorem}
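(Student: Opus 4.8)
\emph{Proof plan.} The idea is to locate, inside $S$, a coarsely embedded copy of the disjoint union of a family of expanders, and then quote the classical fact that such a disjoint union admits no coarse embedding into a Hilbert space. Since a coarse embedding restricts to a coarse embedding of any subspace, and coarse embeddability into Hilbert space is a coarse invariant, this will force $S$ itself to be non‑embeddable.

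The first ingredient I would set up is the Fr\'echet embedding: any $N$‑point metric space $(Y,d_Y)$ with points enumerated $y_1,\dots,y_N$ embeds isometrically into $(\RR^N,d_\infty)$ by $y\mapsto(d_Y(y,y_1),\dots,d_Y(y,y_N))$ --- the triangle inequality gives $\|\cdot\|_\infty\le d_Y$, and the coordinate indexed by $y'$, evaluated at the pair $(y,y')$, returns $d_Y(y,y')$, which gives the reverse bound. When $d_Y$ is integer‑valued with diameter $D$ the image lies in $\{0,1,\dots,D\}^N$. Now the word metric of $\ZZ_n$ agrees with $|i-j|$ on $\{0,1,\dots,D\}$ as soon as $n\ge 2D$, so taking $n:=2D+1$ the $\max$ metric of $(\ZZ_n)^N$ restricts to $d_\infty$ on that subcube. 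Hence every finite metric space with integer distances embeds isometrically into a single piece $(\ZZ_n)^m$ of $S$, with $m$ its cardinality and $n$ twice its diameter plus one.

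Next I would fix a sequence $(G_k)$ of graphs of uniformly bounded degree forming an expander family --- $|V(G_k)|=N_k\to\infty$ and uniform spectral gap $\lambda_2(G_k)\ge\lambda_0>0$ (Margulis, Lubotzky--Phillips--Sarnak, random regular graphs, \dots) --- so that $\diam(G_k)=O(\log N_k)\to\infty$. By the previous paragraph each graph metric embeds isometrically into the piece $(\ZZ_{n_k})^{N_k}$ of $S$ with $n_k=2\diam(G_k)+1$, and distinct $k$ land in distinct pieces. Let $\Sigma\subseteq S$ be the union of these images with the metric induced from $S$: inside each image it is the graph metric, and between two images it exceeds the sum of the two diameters, so $\Sigma$ is coarsely equivalent --- by the coarse‑uniqueness of disjoint‑union metrics noted earlier --- to $\bigsqcup_k(G_k,d_{G_k})$ with its standard metric. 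If $S$ coarsely embedded into a Hilbert space $H$, restriction to $\Sigma$ composed with this coarse equivalence would yield a coarse embedding $\bigsqcup_k(G_k,d_{G_k})\to H$; but expander families are not coarsely embeddable, because the spectral‑gap Poincar\'e inequality forces any Hilbert‑valued map on $G_k$ to keep the average squared distance over all pairs comparable to the (bounded) average over edges, while a constant fraction of pairs in $G_k$ lie at distance at least $c\,\diam(G_k)\to\infty$, leaving no room for a lower control function tending to infinity. This contradiction proves the theorem.

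\emph{Where the work sits.} Along this route there is no genuinely hard step: the single substantive input is the non‑embeddability of an expander family, which is standard, and everything else is bookkeeping --- mainly keeping the three metrics straight (the cyclic word metric on $\ZZ_n$, the $\max$ metric on the product, the graph metric on the $G_k$) and checking that the modulus $n$ can always be chosen large enough to unfold the relevant finite range of $\ZZ_n$ into an honest integer interval while remaining an admissible index. For a self‑contained account one would additionally have to reproduce the expander obstruction itself.
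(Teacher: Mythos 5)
The paper never proves this statement at all: it is imported verbatim from Dranishnikov--Gong--Lafforgue--Yu (their Proposition 6.3), so there is no internal argument to compare against, and your proposal is an independent proof sketch. As a sketch it is correct. Your route --- Fr\'echet embedding of any finite integer-valued metric space into a sup-metric cube, unfolding the cyclic metric of $\mathbb{Z}_n$ on the range $\{0,\dots,D\}$ once $n\ge 2D$, hence isometric copies of a bounded-degree expander family $(G_k)$ inside single pieces $(\mathbb{Z}_{n_k})^{N_k}$ of $S$, followed by the spectral-gap obstruction --- is genuinely different from the original DGLY argument, which does not go through expanders but is an analytic argument in the spirit of Enflo, using (negative/positive definite) kernel characterizations of uniform embeddability on the groups $(\mathbb{Z}_n)^m$ with the sup metric. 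What your approach buys is conceptual economy given standard inputs: everything reduces to two black boxes, the existence of bounded-degree expander families and their non-coarse-embeddability into Hilbert space, at the cost that these inputs are themselves substantial (as you acknowledge). Two points to tighten. First, pass to a subsequence with the $N_k$ strictly increasing so that distinct expanders really occupy distinct pieces of $S$. Second, your appeal to the paper's remark that any two disjoint-union metrics are coarsely equivalent requires inter-piece separations tending to infinity (which you do have, since they exceed $N_k+n_k+N_l+n_l$, not merely the maximum of the diameters as in the paper's loosely stated remark); better still, you can bypass the disjoint-union comparison entirely, because the expander obstruction only uses pairs lying in a single $G_k$: restricting a putative coarse embedding of $S$ to the isometric copies of the $G_k$ already gives maps with uniform control functions $\rho_1,\rho_2$ on each piece, which is exactly what the Poincar\'e-inequality argument needs.
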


Often, an efficient way to decide coarse embeddability (and non-embeddability) of metric  spaces is the following result, which says that this question is "finitely determined".

\begin{Theorem} \label{ThmNow}
 [Nowak, \cite{Now} Theorem 3.4]
 
A metric space $(X,d)$ admits a coarse embedding in a Hilbert space if and only if for $i=1,2$ there are non-decreasing functions $\rho_i:[0,\infty) \to [0, \infty)$  with  $\lim_{t \to \infty} \rho_1(t) = \infty$, such that for every finite subset $A \subset X$ there exists a map $f_A: A \to \ell_2$ satisfying 
$\rho_1(d(x_1,x_2)) \le \lVert f_A(x_1)-f_A(x_2) \rVert_2 \le \rho_2(d(x_1,x_2)) $ for all $x_1,x_2 \in X$.

\end{Theorem}


\section{Asymptotic Dimension of Spaces of Persistence Diagrams with at most n points}
\label{SectAsdim}

In this section we compute the exact  asymptotic dimension of the space of persistence diagrams with at most $n$ points with either the bottleneck distance ($\iDn$) or the p-Wasserstein distances ($\pDn$). Due to the following Proposition \ref{n-point diags coarsely eqv}, it suffices to prove the result just for the case of  the bottleneck distance.
By \ref{FinasdimImpliesCEH}, the finiteness of asymptotic dimension of these spaces imply that they admit coarse embeddings into Hilbert space.

\begin{Proposition}\label{n-point diags coarsely eqv}
For each $n \in \mathbb{N}$ and $p \ge 1$, $\iDn$ and $\pDn$ are coarsely equivalent.
\end{Proposition}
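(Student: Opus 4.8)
The plan is to show that the identity map $\mathrm{id}\colon \iDn \to \pDn$ is a coarse equivalence; in fact it is bi-Lipschitz, which is more than enough. Both spaces have the same underlying set $\D{}{n}$, and for $z=(z_1,\ldots,z_n)$, $z'=(z'_1,\ldots,z'_n)$ in $\D{}{n}$ the distances $d_{\mathcal{B}}(z,z')$ and $d_{\mathcal{W},p}(z,z')$ are, by Definition \ref{Def3}, obtained by appending $n$ diagonal points to each diagram (so that $z_i=z'_i=\Delta$ for $i\in\{n+1,\ldots,2n\}$) and then optimising over the \emph{same} family of matchings $\mathcal{S}_{2n}$; the only difference is that one takes the $\max$-norm and the other the $\ell^p$-norm of the vector $\big(\delta(z_i,z'_{\f(i)})\big)_{i=1}^{2n}\in\RR^{2n}$. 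So the statement reduces to the elementary equivalence of the $\ell^\infty$ and $\ell^p$ norms on $\RR^{2n}$.

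Concretely, I would argue as follows. For the lower bound, pick $\psi\in\mathcal{S}_{2n}$ optimal for $d_{\mathcal{W},p}(z,z')$. Since $\max_i a_i\le\big(\sum_i a_i^p\big)^{1/p}$ for nonnegative reals, and since $d_{\mathcal{B}}(z,z')=\min_{\f\in\mathcal{S}_{2n}}\max_i\delta(z_i,z'_{\f(i)})$,
$$
d_{\mathcal{B}}(z,z')\ \le\ \max_i\delta(z_i,z'_{\psi(i)})\ \le\ \Big(\sum_i\delta(z_i,z'_{\psi(i)})^p\Big)^{1/p}\ =\ d_{\mathcal{W},p}(z,z').
$$
For the upper bound, pick $\f\in\mathcal{S}_{2n}$ optimal for $d_{\mathcal{B}}(z,z')$; then, using $\big(\sum_{i=1}^{2n}a_i^p\big)^{1/p}\le(2n)^{1/p}\max_i a_i$,
$$
d_{\mathcal{W},p}(z,z')\ \le\ \Big(\sum_i\delta(z_i,z'_{\f(i)})^p\Big)^{1/p}\ \le\ (2n)^{1/p}\max_i\delta(z_i,z'_{\f(i)})\ =\ (2n)^{1/p}\,d_{\mathcal{B}}(z,z').
$$
Hence $d_{\mathcal{B}}\le d_{\mathcal{W},p}\le(2n)^{1/p}\,d_{\mathcal{B}}$ on $\D{}{n}$.

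These two inequalities exhibit $\mathrm{id}\colon\iDn\to\pDn$ as a coarse embedding with control functions $\rho_1(t)=t$ and $\rho_2(t)=(2n)^{1/p}t$ (both non-decreasing, and $\rho_1(t)\to\infty$), and it is trivially coarsely onto since it is a bijection; so it is a coarse equivalence. I do not anticipate a genuine obstacle here: the only point requiring a little care is to observe that, after the identical padding by $n$ copies of $\Delta$, both metrics optimise over the same index set $\mathcal{S}_{2n}$, so the two norms are being applied to vectors of one fixed length $2n$; once that is noted, the proposition is nothing more than finite-dimensional norm equivalence.
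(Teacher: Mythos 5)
Your proof is correct and takes the same route the paper intends: the paper's proof is just the one-line remark that the claim ``can be checked by direct comparison of the definitions,'' and your argument is exactly that comparison carried out explicitly, yielding the bi-Lipschitz bounds $d_{\mathcal{B}}\le d_{\mathcal{W},p}\le (2n)^{1/p}d_{\mathcal{B}}$ for the identity map. No gaps.
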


\begin{proof}
This can be checked by direct comparison of the definitions of these metrics.
\end{proof}

The main result of this section is the following.

\begin{Theorem}\label{asdim eq 2n}
For $n \in \mathbb{N}$, $\asdim \pDn =\asdim \iDn  =2n$.
\end{Theorem}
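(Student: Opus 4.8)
The plan is to prove the two inequalities $\asdim \iDn \le 2n$ and $\asdim \iDn \ge 2n$ separately, then invoke Proposition \ref{n-point diags coarsely eqv} to transfer the result to $\pDn$. For the upper bound, the key observation is that the unorthodox definition in Section \ref{SectP} exhibits $\iDn$ as a metric quotient: up to a bounded-distance modification, $\iDn$ is the image of $((\D{}{1})^{2n}, d_\infty)$ under the action of $\mathcal{S}_{2n}$ permuting coordinates (the extra $n$ diagonal copies are what make the appended-$\Delta$ description faithful as a quotient metric). First I would check that $(\D{}{1},\delta)$ has asymptotic dimension $2$: it is essentially a closed half-plane with the diagonal collapsed to a point, so a brick decomposition as in Figure \ref{FigAtish} adapted to two dimensions gives $\asdim (\D{}{1},\delta) \le 2$, while Lemma \ref{embedlargecubes} applied to arbitrarily large $d_\infty$-cubes embedded in $T$ gives $\ge 2$. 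Then the finite-product formula for asymptotic dimension (or a direct brick argument) yields $\asdim ((\D{}{1})^{2n}, d_\infty) = 4n$ --- but this is too big, so instead I would work directly with $(\D{}{1})^n$ and a careful quotient. The cleaner route: realize $\iDn$ as a quotient of a space $X$ with $\asdim X = 2n$ by a finite group acting by isometries, and apply Theorem \ref{Kasp}. The natural candidate for $X$ is $((\D{}{1})^{n}, d_\infty)$ with $\mathcal{S}_n$ permuting coordinates --- but one must verify that the bottleneck metric on $\D{}{n}$ (which appends $n$ extra diagonal points and minimizes over $\mathcal{S}_{2n}$) is coarsely equivalent to the honest quotient metric $d_{\mathcal{S}_n}$ on $(\D{}{1})^n/\mathcal{S}_n$. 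This coarse equivalence is the technical heart of the upper bound: the appended diagonal points only ever get matched to diagonal points or to points of bounded $\delta$-distance from the diagonal, so the two metrics differ in a controlled (indeed, I expect boundedly) way, and properness of $(\D{}{1})^n$ lets Theorem \ref{Kasp} apply to conclude $\asdim \iDn = \asdim ((\D{}{1})^n, d_\infty) = 2n$.

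For the lower bound $\asdim \iDn \ge 2n$, I would exhibit, for every $R > 0$, an isometric embedding of $([0,R]^{2n}, d_\infty)$ into $\iDn$ and invoke Lemma \ref{embedlargecubes}. The idea mirrors the construction in the proof of Corollary \ref{CorAsdimi}: place $n$ independent birth-death points in widely separated regions of $T$, each contributing two real coordinates (birth and death), with the regions spread far enough apart (and far enough from the diagonal) that no optimal matching ever pairs a point from one region with a point from another region or with an appended diagonal point. Concretely, something like $(x_1,\ldots,x_{2n}) \mapsto (\,(c_1 + x_1,\ c_1' + x_2),\ (c_2 + x_3,\ c_2' + x_4),\ \ldots\,)$ with the constants $c_j, c_j'$ chosen so each point sits in its own far-away box of side $R$; then for two images $z, z'$ the only competitive matching is the identity, so $d_{\mathcal{B}}(z,z') = \max_i \delta(\cdot,\cdot) = \max_i |x_i - x_i'|$, which is exactly the $d_\infty$ metric on $[0,R]^{2n}$. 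Verifying that cross-box and box-to-diagonal matchings are strictly worse is a routine separation estimate.

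The main obstacle, I expect, is the upper bound --- specifically, pinning down precisely which product space has $\asdim$ equal to $2n$ (not $4n$) and showing the bottleneck metric on $\D{}{n}$ is coarsely equivalent to the quotient metric $d_{\mathcal{S}_n}$ on $(\D{}{1})^n/\mathcal{S}_n$ so that Theorem \ref{Kasp} can be applied. The subtlety is that the bottleneck distance between $z \in \D{}{n}$ and $z' \in \D{}{n}$ allows points of $z$ to be matched to appended diagonal copies rather than to points of $z'$, which the naive quotient metric on $(\D{}{1})^n$ does not permit; one must argue this only helps by a bounded amount (when a point is near the diagonal, $\delta$ to $\Delta$ is small anyway) and hence does not affect coarse type. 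Once that coarse equivalence is established, everything else is either a standard brick-decomposition computation ($\asdim$ of a finite power of a $2$-dimensional space) or a direct appeal to the quoted theorems (Theorem \ref{Kasp}, Lemma \ref{embedlargecubes}, Proposition \ref{n-point diags coarsely eqv}).
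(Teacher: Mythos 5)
Your lower bound (isometric copies of $([0,R]^{2n},d_\infty)$ built from $n$ widely separated boxes placed far from the diagonal, followed by Lemma \ref{embedlargecubes}) is exactly the paper's argument, and the reduction to $\pDn$ via Proposition \ref{n-point diags coarsely eqv} also matches. The gap is in the upper bound, at precisely the step you flag as the technical heart: the $\mathcal{S}_n$-quotient metric on $(\D{}{1})^n$ is \emph{not} within bounded distance of $d_{\mathcal{B}}$, and with the paper's $\delta$ it is not even coarsely equivalent to it. Already for $n=1$ the quotient is just $(\D{}{1},\delta)$, which the paper explicitly notes is not isometric to $\D{\mathcal{B}}{1}$; worse, the identity map fails to be a coarse equivalence: for $z=(0,2M)$ and $z'=(t,t+2M)$ one has $\delta(z,z')=t\to\infty$ while $d_{\mathcal{B}}(z,z')\le\max\{\delta(z,\Delta),\delta(z',\Delta)\}=M$ stays bounded, because the optimal bottleneck matching sends \emph{both} points --- which may be arbitrarily far from the diagonal --- to appended copies of $\Delta$. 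So your heuristic that appended diagonal points only ever absorb points close to the diagonal is false; the problematic case is a pair of diagrams whose points are far from the diagonal but even farther from each other. (The comparison can be salvaged multiplicatively rather than additively: replacing $\delta$ on each factor by the genuine collapsed-diagonal metric $\min\{d_\infty(z,z'),\,\delta(z,\Delta)+\delta(z',\Delta)\}$ and rerouting any pair of diagonal-matched points through each other shows the $\mathcal{S}_n$-quotient metric lies within a factor $2$ of $d_{\mathcal{B}}$; but that is exactly the argument you would still need to write down, and it is not the one you sketch.)

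The paper sidesteps this comparison entirely, and this is the one idea your proposal is missing. Instead of $(\D{}{1})^n/\mathcal{S}_n$, it takes $S\subset((\D{}{1})^{2n},d_\infty)$ to be the union of the $\frac{(2n)!}{n!\,n!}$ coordinate subspaces $S_i$ consisting of $2n$-tuples having $\Delta$ in $n$ prescribed slots. Each $S_i$ is isometric to $((\D{}{1})^n,d_\infty)$, so the product theorem gives $\asdim S_i\le 2n$ and the finite union theorem gives $\asdim S\le 2n$ (your worry about getting $4n$ disappears because one never uses all of $(\D{}{1})^{2n}$), while the cube embeddings give $\asdim S=2n$. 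The payoff of the padded Definition \ref{Def3} is that the $\mathcal{S}_{2n}$-quotient metric on $S/\mathcal{S}_{2n}$ coincides \emph{exactly} with the bottleneck distance, so Theorem \ref{Kasp} applies directly, with no coarse-equivalence lemma needed. As written, your upper bound rests on a false claim and therefore has a genuine gap, even though the overall strategy (finite isometric group action plus Kasprowski) is the right one.
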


The following lemma deals with the case $n=1$.

\begin{Lemma}\label{asdim D1infty is 2}
$\asdim \D{\mathcal{B}}{1} = 2$
\end{Lemma}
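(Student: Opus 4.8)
The plan is to establish $\asdim \D{\mathcal{B}}{1} = 2$ by proving the two inequalities separately, and the main structural observation is that $\D{\mathcal{B}}{1}$ is (coarsely equivalent to) the quotient of a nice subset of $\RR^2$ by a finite group, so Theorem \ref{Kasp} applies. Recall $\D{\mathcal{B}}{1} = (\D{}{1})^2 / \mathcal{S}_2$, where $\D{}{1} = T \cup \{\Delta\}$ with $T = \{(x_1,x_2) : x_2 > x_1 \geq 0\}$. First I would observe that the lower bound $\asdim \D{\mathcal{B}}{1} \geq 2$ follows immediately from Lemma \ref{embedlargecubes}: for any $R>0$ the map $x = (x_1,x_2) \mapsto [(2R, 4R + x_1)]$ (a single off-diagonal point paired with $\Delta$) isometrically embeds $([0,R]^2, d_\infty)$ into $\D{\mathcal{B}}{1}$, since for points that far from the diagonal the optimal matching in the bottleneck distance pairs the two off-diagonal points with each other, and the $\delta$ distance there is just $d_\infty$ on $\RR^2$; hence $\asdim \D{\mathcal{B}}{1} \geq 2$.

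For the upper bound $\asdim \D{\mathcal{B}}{1} \leq 2$, the idea is to realize $\D{\mathcal{B}}{1}$ up to coarse equivalence as a quotient $Y/\mathcal{S}_2$ where $Y \subset \RR^2 \times \RR^2 \cong \RR^4$ is a closed subset, so that $\asdim Y \leq \asdim \RR^4 = 4$ — but that gives the wrong bound, so a more careful model is needed. The correct observation is that a point of $\D{\mathcal{B}}{1}$ is a multiset of at most two points of $T$, and the bottleneck metric (in our streamlined formulation) is computed via matchings of $(z_1, z_2, \Delta, \Delta)$. Points of $\D{\mathcal{B}}{1}$ with at least one coordinate equal to $\Delta$ form an isometric copy of $(\D{}{1}, \delta')$ where $\delta'(z, \Delta) = \delta(z,\Delta)$ and $\delta'(z,z') = \min(\delta(z,z'), \delta(z,\Delta) + \delta(z',\Delta))$ — this is coarsely $\RR^2$ embedded suitably, asdim $2$. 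The bulk of the space consists of genuine two-point diagrams; here I would exhibit a coarse equivalence with the quotient of $\{(p,q) \in T \times T\} \subset \RR^4$ by the $\ZZ_2$ swap, and argue that this quotient is coarsely equivalent to a $2$-dimensional space by a direct brick-decomposition argument in the quotient, or by noting the quotient $(\RR^2)^2/\mathcal{S}_2$ is NOT $2$-dimensional — so this approach is flawed and one must use the bottleneck-specific collapsing.

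Reconsidering, the cleanest route: the bottleneck metric $d_{\mathcal{B}}$ on $\D{}{1} = (\D{}{1})^2/\mathcal{S}_2$, unlike the Wasserstein metric, satisfies $d_{\mathcal{B}}(z, z') = \min_{\f} \max_i \delta(z_i, z'_{\f(i)})$, and I claim this makes $\D{\mathcal{B}}{1}$ coarsely equivalent to $(\D{}{1}, \delta)$ itself — because in the max-metric a two-point diagram is coarsely controlled by its single farthest-from-diagonal point (the second point contributes at most the first point's distance to $\Delta$ up to the matching, so it is "absorbed" coarsely). More precisely I would show the inclusion $(\D{}{1}, \delta) \hookrightarrow \D{\mathcal{B}}{1}$ as one-point diagrams is a coarse equivalence: it is coarsely onto since any $[z_1, z_2]$ with (say) $\delta(z_1,\Delta) \geq \delta(z_2, \Delta)$ is within $2\delta(z_2,\Delta) \leq$ bounded-relative distance... which is not bounded. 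So that fails too. The honest approach, and the one I expect the authors take, is: $\D{\mathcal{B}}{1}$ is the quotient of the proper metric space $(T\cup\{\Delta\})^2$ (with max metric) by $\mathcal{S}_2$, and one shows directly that $(T \cup \{\Delta\}, \delta)$ has $\asdim = 2$ (it coarsely embeds in $\RR^2$ via a brick decomposition respecting the diagonal collapse), then $((T\cup\{\Delta\})^2, d_\infty\text{-type})$ has $\asdim \leq 4$ — still wrong.

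Given the difficulty of pinning the exact model in a sketch, here is the plan I would actually commit to. Lower bound: as above via Lemma \ref{embedlargecubes}, embedding $[0,R]^2$ as single off-diagonal points — done, $\asdim \geq 2$. Upper bound: stratify $\D{\mathcal{B}}{1}$ by how far the second-farthest point is from $\Delta$. For a threshold $C$ depending on scale $R$: the set $U_C$ of diagrams whose second point is within $\delta$-distance $C$ of $\Delta$ deformation-retracts (coarsely) onto one-point diagrams — so restricted to $U_C$, $\D{\mathcal{B}}{1}$ is within Hausdorff distance $C$ of $(\D{}{1},\delta)$, which has $\asdim \leq 2$ by a brick decomposition of the closed half-plane $T$ with the diagonal point attached. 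The complement consists of diagrams with \emph{both} points far from $\Delta$, hence far from each other's diagonal-absorption, so on this region the swap action of $\mathcal{S}_2$ is coarsely free and the region is coarsely $\{(p,q) \in T^2 : \delta(p,\Delta), \delta(q,\Delta) > C, p \neq q\}/\mathcal{S}_2$; but on this region the bottleneck (max) metric means the diagram is determined coarsely by the \emph{unordered pair} in a product where one can apply a Hurewicz-type argument: the ``project to farthest point'' map $\D{\mathcal{B}}{1} \to (\D{}{1},\delta)$ has point-inverses of $\asdim \leq$ something finite, and using the finite-to-one / Hurewicz theorem for asymptotic dimension one concludes $\asdim \D{\mathcal{B}}{1} \leq \asdim(\D{}{1},\delta) + (\text{fiber dim}) = 2$. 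The main obstacle, and where the bottleneck (as opposed to Wasserstein) structure is essential, is precisely controlling these fibers: in the max-metric, once the farthest point is fixed, the second point ranges over a bounded-from-$\Delta$ set but its contribution to the metric is \emph{capped}, making the fibers coarsely $2$-dimensional rather than growing, so the Hurewicz inequality gives $2 + 0$. I expect the actual proof short-circuits all this by directly writing $\D{\mathcal{B}}{1}$ as a quotient of an explicit $\asdim$-$2$ space by a finite group and invoking Theorem \ref{Kasp} — identifying that explicit space is the crux.
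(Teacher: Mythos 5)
Your proposal misidentifies the space. $\D{\mathcal{B}}{1}$ is the space of diagrams with at most \emph{one} off-diagonal point: as a set it is $\D{}{1}=T\cup\{\Delta\}$, and the bottleneck metric (with one appended copy of $\Delta$) is $d_{\mathcal{B}}(z,z')=\min\bigl\{\delta(z,z'),\max\{\delta(z,\Delta),\delta(z',\Delta)\}\bigr\}$. It is not $(\D{}{1})^2/\mathcal{S}_2$ --- that quotient is $\D{}{2}$, whose asymptotic dimension is $4$ by Theorem \ref{asdim eq 2n}. Consequently most of your argument (stratifying by the position of the ``second point'', the ``project to the farthest point'' map, the Hurewicz-type control of fibers) is aimed at the wrong space, and for that space the bound $\asdim\le 2$ you are steering toward is simply false, so no fiber-control argument can close it; your own intermediate remark that the swap quotient is not $2$-dimensional was the correct warning sign. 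For the actual space the upper bound needs none of this machinery, and the paper does not get it from Theorem \ref{Kasp} either (that theorem enters only in the passage to general $n$): one adapts the standard $3$-colored brick decomposition of the half-plane $T$ to $d_{\mathcal{B}}$, the key observation being that for any threshold $C$ the strip $\{z\in T:\delta(z,\Delta)\le C\}$ has diameter at most $C$ in $d_{\mathcal{B}}$ (any two of its points can be matched through $\Delta$), so at scale $R$ it may be taken as a single uniformly bounded piece, while away from the diagonal $d_{\mathcal{B}}$ coincides with $d_\infty$ at that scale and the usual bricks work.

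Your lower bound has the right idea --- place an $R\times R$ square so far from the diagonal that the direct matching is optimal, so $d_{\mathcal{B}}$ restricts to $d_\infty$, and then apply Lemma \ref{embedlargecubes} --- and this is exactly the paper's argument. But your explicit map $(x_1,x_2)\mapsto(2R,4R+x_1)$ depends only on $x_1$, so it collapses the square to a segment and is not an isometric embedding of $[0,R]^2$; you appear to have copied the formula from the proof of Corollary \ref{CorAsdimi}, where each coordinate goes to a \emph{separate} point of the diagram, which is impossible with a single point. A correct choice is, for example, $(x_1,x_2)\mapsto(x_1,\,10R+x_2)$: every image point has $\delta$-distance at least $9R/2$ from $\Delta$ while pairwise $d_\infty$-distances are at most $R$, so $d_{\mathcal{B}}=d_\infty$ on the image. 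In summary: the lower bound is recoverable with a one-line fix, but the proposed upper bound is a genuine gap resting on a misreading of what $\D{\mathcal{B}}{1}$ is.
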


\begin{proof}
The inequality $\asdim \D{\mathcal{B}}{1} \le 2$ is obtained by adapting the usual 3-colored brick decomposition of the plane (see \cite{BD}) to our case. Choose $R>0$ as in Definition \ref{Def5}. The decomposition we will be using is depicted in Figure \ref{brick}. It consists of rectangles of sides $2R \times R$ and a large monochromatic (grey) set $B$  neighboring the diagonal. Note that the distance between any pair of rectangles of the same color (excluding the neighboring pairs that form $B$) is at least $R$. The rectangles are of diameter  $2R$. Note that  $B$ is of diameter $10R$ as the horizontal distance from any point of $B$ to the diagonal is at most $2R + 2R + R = 5R$ (two and a half bricks), implying that $B$ is contained in the closed ball of radius $5R$ around the diagonal. Grouping the rectangles and $B$ by color we obtain a cover of $\D{\mathcal{B}}{1}$ by three subsets (white, striped and grey), each of which  consists of sets (bricks and, in the case of the grey subset,  $B$), which are $5R$-bounded and $R$-disjoint. Hence $\asdim \D{\mathcal{B}}{1} \le 2$ by Definition \ref{Def5}.

We now turn attention to inequality $\asdim \D{\mathcal{B}}{1} \geq 2$, which we prove using Lemma \ref{embedlargecubes}.  Given $R>0$ the subset $\widetilde B=[0,R]\times [2R,3R]$ in $\D{\infty}{1}$ is isometric to $([0,R]^2,d_\infty)$. To verify this note that $\widetilde B$ is of diameter $R$ and at distance $2R$ from the diagonal, hence no optimal matching used when computing the induced distance on $\widetilde B$ (recall Definition \ref{Def3}) pairs any point of $\widetilde B$ to the diagonal. The proof now follows by Lemma \ref{embedlargecubes}.
\begin{figure}
    \centering
    \includegraphics[width=4cm]{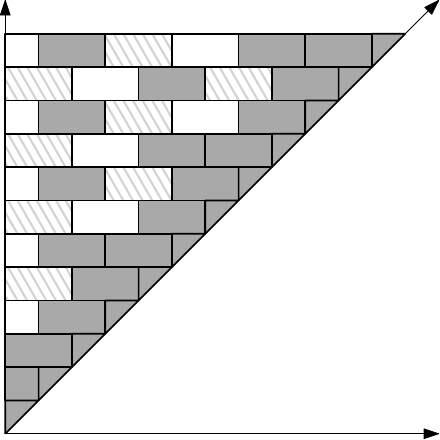}
    \caption{$\asdim \D{\mathcal{B}}{1} \le 2$}
    \label{brick}
\end{figure}
\end{proof}






Given families $\UU$ and $\VV$ of subsets of a metric space and $r>0$, we define the $r$-saturated union of $\UU$ and $\VV$ as
$$
\UU \cup_r \VV = \{N_r(U, \VV) \mid U \in \UU  \} \cup \{V \in \VV \mid d(V, U)>r \ \forall U\in \UU \}
$$
where 
$$
N_r(U, \VV) = U \cup \bigcup_{V\in \VV, d(U,V)\leq r} V.
$$
In this context we define $d(U,V)=\inf_{u\in U, v\in V} d(u,v)$.

\begin{Lemma}
 \cite[Proposition 24]{BD}
 \label{SashaCovers}
 Let $\UU$ be an $r-$disjoint, $R$-bounded family of subsets of $X$ with $R \geq r$. Let $\VV$ be a $5R$-disjoint, $D$-bounded family of subsets of $X$. Then $\VV \cup_r \UU$ is $r$-disjoint and $(D + 2(r + R))$-bounded.
\end{Lemma}

We now  prove Theorem \ref{asdim eq 2n}.

\begin{proof}
We want to prove that for each $r>0$ there exist uniformly bounded families $\UU_0, \UU_1, \ldots, \UU_{2n}$ consisting of $r$-disjoint subsets of $X$, whose union is a cover of $X$.

We will proceed by induction on $n$. Assume  $\asdim \mathcal{D}_{n-1}^\BB  =2(n-1)$, with Lemma \ref{asdim D1infty is 2} providing the initial case.
\begin{enumerate}
 \item 
 Decompose 
 $$
 \iDn = \widetilde \iDn \cup N(\mathcal{D}_{n-1}^\BB, r),
 $$
 where $\widetilde \iDn \subset \iDn$ is the collection of all diagrams, whose all $n$ points are at $d_\infty$ distance more than $r$ from the diagonal $\Delta$, and $N(\mathcal{D}_{n-1}^\BB, r)$ is the closed $r$ neighborhood of $\mathcal{D}_{n-1}^\BB \subset \iDn$ in $\BB$. Observe that if a point of a persistence diagram is at distance at most $r$ from $\Delta$, the diagram is at distance at most $r$ from a diagram in $\mathcal{D}_{n-1}^\BB$ obtained by replacing the mentioned point by $\Delta$, and thus $ \widetilde \iDn \cup N(\mathcal{D}_{n-1}^\BB, r)$ indeed equals $\iDn$.
 \item \label{App3}
 By the induction hypothesis $\asdim N(\mathcal{D}_{n-1}^\BB, r)= 2(n-1)$ as $\mathcal{D}_{n-1}^\BB$ is obviously coarsely dense in $N(\mathcal{D}_{n-1}^\BB, r)$.
 \item 
 Define 
 $$
 Z=\{(x,y)\in \RR^2, \mid y - r > x \geq 0\}.
 $$
\item 
\label{App1} It is elementary to observe that $(Z^n,d_\infty)$ is a subset of $(\RR^{2n},d_\infty)$ and that $\asdim (Z^n,d_\infty) = 2n$ by the monotonicity of asymptotic dimension, the fact that $\asdim (\RR^{2n},d_\infty)=2n$ (jointly implying $\asdim (Z^n,d_\infty) \leq 2n$) and Lemma \ref{embedlargecubes}.
\item 
Let $S_n$ act on $Z^n$ by permutation of components. The resulting quotient metric on $(Z^n, d_\infty)/S_n$ is
 $$
{d_\infty/S_n}(z,z') = \min_
 {\f \in  {\mathcal{S}_{n}}} \max_i d_\infty (z_i, z'_{\f(i)})
 $$
 for $z=(z_1, z_2, \ldots,z_n), z'=(z'_1, z'_2, \ldots,z'_n)\in Z^n$.
 \item As $(Z^n, d_\infty)$ is proper, so is $(Z^n, d_\infty)/S_n$ and the main result of \cite{Kas} combined with (\ref{App1}) implies $\asdim (Z^n, d_\infty)/S_n = 2n$.
 \item \label{App2}
 Choose  uniformly bounded families $\WW_0, \WW_1, \ldots, \WW_{2n}$ consisting of $r$-disjoint subsets of $Z_n/{S_n}$, whose union is a cover of $Z_n/{S_n}$. Choose $D>0$ as an upper bound on the diameter of sets of all $\WW_i$.
 \item We claim that families $\WW_0, \WW_1, \ldots, \WW_{2n}$ are uniformly bounded and $r$-disjoint in $\widetilde \iDn$ as well:
	\begin{enumerate}
 		\item First note that as sets $\widetilde \iDn$ and $Z_n/{S_n}$ are the same, the difference is in the metrics: $d_\BB$ and $d_\infty/S_n$.
		\item By the definition $d_\BB \leq d_\infty/S_n$ as the former may utilize matching with the diagonal as well, hence families $\WW_i$ are $D$-bounded in $d_\BB$ as well.
		\item If $d_\BB(x,y) < d_\infty/S_n (x,y)$ then the realizing matching in $d_\BB$ includes a matching to the diagonal. As the diagonal is at $d_\infty$-distance more than $r$ from all points of $x$ and $y$, we have $d_\BB(x,y)>r$. In particular, the disjointness may not decrease below $r$.
	\end{enumerate}
 \item By (\ref{App3}) we can also choose uniformly bounded families $\VV_0, \VV_1, \ldots, \VV_{2n}$ consisting of $5D$-disjoint subsets of $N(\mathcal{D}_{n-1}^\BB, r)$, whose union is a cover of $N(\mathcal{D}_{n-1}^\BB, r)$.
 \item By Lemma \ref{SashaCovers} families $\UU_0, \UU_1, \ldots, \UU_{2n}$ defined as $\UU_i = \WW_i \cup_r \VV_i$ are $r$-disjoint uniformly bounded and their union covers $\iDn$. 
\end{enumerate}
\end{proof}


\section{Non-Embeddability results}
\label{SectNonEmbd}

In this section we consider embeddings of finite metric spaces into $\iDn$ and prove that $\iDi$ does not coarsely embed into Hilbert space. For embeddings of separable bounded metric spaces see \cite[Theorem 3.1]{Bub}.

\begin{Lemma}\label{LemiDi}
For each $h\in \NN$ every finite metric space $(X,d)$ embeds isometrically into $\D{\mathcal{B}}{|X|}$ above the horizontal line at height $h$.
\end{Lemma}

\begin{proof}
 Let $X=\{x_0, x_1, \ldots, x_n\}$ and $R=\mathrm{diam}(X)$. For each $k$ define 
 $$
 f(x_k) = \Big\{\big(3Ri, 3Ri + 3R + d(x_k,x_i)+h\big) \mid i=1,2,\ldots,n\Big\}.
 $$
 Note that $f \colon X \to f(X)\subset \D{\mathcal{B}}{|X|}$ is an isometry due to the following facts: 
\begin{enumerate}
 \item for each $k$ the subset $f(x_k)$ consists of precisely one point at each $x$-coordinate of the form $3Ri, \forall i=1,2,\ldots,n$.
 \item for each $j$ and $k$, the optimal pairing between $f(x_j)$ and $f(x_k)$ is perfect  (no point is paired to the diagonal point) and always pairs points of the same $x$-coordinate.
 \item for each $j,k,i$, 
 $$
d_\infty\Big(\big(3Ri, 3Ri + 3R + d(x_j,x_i)+h\big), \big(3Ri, 3Ri + 3R + d(x_k,x_i)+h\big)\Big)= |d(x_j,x_i)-d(x_k,x_i)| \leq d(x_j,x_k)
 $$ 
 with the equality attained at $i=j$ and $i=k$.
\end{enumerate}
We conclude that for each $j,k$, $d(x_k,x_j)=d_\infty(f(x_j),f(x_k))$, hence $f$ is an isometry.
\end{proof}

\begin{Corollary}\label{CoriDi}
 A coarse disjoint union of any collection of finite metric spaces $\{A_i\}_{i\in \{1, 2, \ldots\}}$ embeds isometrically into $\iDi$. If for some $M\in \NN$ we have $|A_i|\leq M, \forall i$, then the embedded space lies within $\D{\mathcal{B}}{M}$.
\end{Corollary}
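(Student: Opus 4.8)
\textbf{Proof proposal for Corollary \ref{CoriDi}.}

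The plan is to combine the isometric embedding of a single finite metric space from Lemma \ref{LemiDi} with the coarse rigidity of the disjoint-union construction described right after Theorem \ref{Kasp}. First I would apply Lemma \ref{LemiDi} to each $A_i$ separately, obtaining isometric embeddings $f_i \colon A_i \hookrightarrow \D{\mathcal{B}}{|A_i|} \subset \iDi$. The subtlety is that these images sit inside one fixed target space $\iDi$, so I need to arrange that points coming from distinct $A_i$'s are pushed far apart and, conversely, that no uncontrolled collapsing occurs — i.e. the assembled map $f \colon \bigsqcup_i A_i \to \iDi$ is again an isometry onto its image with respect to a coarse-disjoint-union metric on the domain.

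The key device is that in the construction of Lemma \ref{LemiDi} the coordinates used for $A_i$ are multiples of $3R_i$ with $R_i = \diam(A_i)$; by instead translating the whole block $f_i(A_i)$ along the $x$-axis by an amount $T_i$ that grows fast enough with $i$ (say $T_i$ larger than the diameter of $f_i(A_i)$ plus all previous translations), I can ensure that for $a \in A_i$, $a' \in A_j$ with $i \neq j$ the optimal bottleneck matching between $f(a)$ and $f(a')$ is forced to send every off-diagonal point to the diagonal $\Delta$ (because points at wildly different $x$-coordinates are closer to $\Delta$ than to each other once the $x$-gap exceeds the distances to the diagonal). A short computation then shows $d_{\mathcal{B}}(f(a),f(a'))$ is comparable to the corresponding between-component distance in \emph{some} coarse disjoint union metric on $\bigsqcup_i A_i$; since Theorem's remark guarantees any two such metrics are coarsely equivalent, $f$ is then a coarse embedding — in fact an isometry for the coarse disjoint union metric induced by $f$ itself. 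For the bounded case $|A_i| \le M$, each $f_i(A_i)$ lies in $\D{\mathcal{B}}{|A_i|} \subset \D{\mathcal{B}}{M}$ via the natural inclusion of Definition \ref{Def2}(3) (appending copies of $\Delta$), and translating along the $x$-axis preserves this, so the whole image stays in $\D{\mathcal{B}}{M}$.

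I expect the main obstacle to be the bookkeeping in the second point above: verifying that after the $x$-translations the optimal matching between points of different components really does match everything to $\Delta$, and that within a single component the matching is unaffected by the translation (which is clear since translation along $x$ is a $d_\infty$-isometry of $T$ fixing $\delta(\cdot,\Delta)$ only approximately — one must check the diagonal distances $\delta((x_1,x_2),\Delta) = (x_2-x_1)/2$ stay controlled, which they do because the translation changes $x_1$ and $x_2$ equally). Once the matchings are pinned down, the distance computations are routine, analogous to facts (1)--(3) in the proof of Lemma \ref{LemiDi}. The final sentence about the bounded case is then immediate from the inclusions $\D{\mathcal{B}}{k} \subset \D{\mathcal{B}}{M}$ for $k \le M$.
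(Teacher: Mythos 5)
Your overall strategy (apply Lemma \ref{LemiDi} to each $A_i$ and then separate the images inside $\iDi$) is the same as the paper's, but your separation device is inadequate, and this is a genuine gap. For any two diagrams $z,z'$ in $\D{}{<\infty}$ one always has $d_{\mathcal{B}}(z,z')\le \max_k \max\{\delta(z_k,\Delta),\delta(z'_k,\Delta)\}$, by the matching that sends every off-diagonal point to one of the appended copies of $\Delta$. For the images produced by Lemma \ref{LemiDi} this bound is at most $2\max\{\diam A_i,\diam A_j\}$, and your translations do not change it, since (as you yourself note) they preserve $\delta(\cdot,\Delta)$ exactly. So once the $x$-gap is large, the cross-component distance is pinned between roughly $1.5$ and $2$ times the larger of the two diameters, \emph{independently of} $T_i$: the blocks cannot be made ``arbitrarily far apart'' by sliding them along the diagonal. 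Your induced metric does satisfy the paper's minimal condition (cross distances exceed the maximum of the two diameters), but your further claim that ``any two such metrics are coarsely equivalent, hence we are done'' fails exactly in the regime where the corollary is needed: in Theorem \ref{ThmCEH} the collection contains infinitely many spaces of uniformly bounded diameter (all $(\ZZ_2)^m$ have diameter $1$), so your image has infinitely many components pairwise within bounded distance, and it is not coarsely equivalent to the space $S$ of Theorem \ref{ThmDra}, whose inter-component distances grow without bound.

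The paper's phrase ``the images can be constructed arbitrarily ever further from each other'' implicitly requires moving the blocks \emph{away from the diagonal}, not just along it. A concrete repair: re-run the construction of Lemma \ref{LemiDi} for $A_i$ with the offset $3R$ replaced by a constant $C_i\ge 3\diam A_i$ chosen so that $C_i\to\infty$, and with the $x$-ranges of distinct blocks widely separated. The isometry argument of the lemma is unchanged (same-coordinate pairings cost at most $\diam A_i$, different-coordinate pairings at least $C_i$, pairings with $\Delta$ at least $C_i/2$), while any matching between diagrams from different blocks must either send some off-diagonal point to $\Delta$ (cost at least $\min\{C_i,C_j\}/2$) or pair off-diagonal points across blocks (cost at least the $x$-gap). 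Hence inter-component distances genuinely tend to infinity, giving a coarse disjoint union in the sense needed for Theorem \ref{ThmDra}. Your treatment of the bounded case ($|A_i|\le M$, image inside $\D{\mathcal{B}}{M}$ via the inclusion of Definition \ref{Def2}(3)) is fine and unaffected by this correction.
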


\begin{proof}
 Using Lemma \ref{LemiDi} we can isometrically embed each $A_i$ into $\D{\mathcal{B}}{|A_i|}$ above any height of our choosing. Starting with $A_1$ we inductively embed $A_i$  into $\iDi$ using Lemma \ref{LemiDi} so that the $y$-coordinates of the embedded $A_i$ are at least $\max_{j \leq i} \diam(A_j)$ above the maximal $y$-coordinates of the embedded $A_{i-1}$. Let $\widetilde A$ denote  the embedded union of $\{A_i\}_{i\in \{1, 2, \ldots\}}$. The subspace metric on $\widetilde A$ turns $\widetilde A$ into a coarse disjoint union of  $\{A_i\}_{i\in \{1, 2, \ldots\}}$. 
  
If for some $M\in \NN$ we have $|A_i|\leq M, \forall i$, then the embedding above maps each $A_i$ into $\D{\mathcal{B}}{M}$ by Lemma \ref{LemiDi}, and hence $\widetilde A \subset \D{\mathcal{B}}{M}$.
\end{proof}

\begin{Theorem}\label{ThmCEH}
 $\iDi$ does not coarsely embed into Hilbert space.
\end{Theorem}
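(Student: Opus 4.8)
The plan is to combine Corollary \ref{CoriDi} with the non-embeddability result of Dranishnikov et al., Theorem \ref{ThmDra}. Recall that Theorem \ref{ThmDra} produces a specific metric space $S$, namely a coarse disjoint union of all the finite cubes $(\ZZ_n)^m$ (with $\ZZ_n$ carrying the word metric and $(\ZZ_n)^m$ the max metric), and asserts that $S$ does not coarsely embed into Hilbert space. So it suffices to realize (a space coarsely equivalent to) $S$ inside $\iDi$ via a coarse embedding, since coarse embeddability is transitive: if $S$ coarsely embeds into $\iDi$ and $\iDi$ coarsely embeds into a Hilbert space, then $S$ coarsely embeds into a Hilbert space, contradicting Theorem \ref{ThmDra}.

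First I would observe that each $(\ZZ_n)^m$ is a finite metric space, so the family $\{(\ZZ_n)^m\}_{m,n\ge 1}$ (enumerated in any fixed order) is a countable collection of finite metric spaces. Corollary \ref{CoriDi} then gives an isometric embedding of a coarse disjoint union of this family into $\iDi$. Next I would note that the coarse disjoint union appearing in Theorem \ref{ThmDra}, although defined with a particular lower bound ($d(x,y) > m+n+m'+n'$ between the $(\ZZ_n)^m$ and $(\ZZ_{n'})^{m'}$ pieces), is coarsely equivalent to any other coarse disjoint union of the same pieces: as remarked in the excerpt just before Theorem \ref{ThmDra}, any two metrics on a disjoint union of bounded spaces that restrict correctly on each piece and push the pieces apart are coarsely equivalent — here one must be slightly careful because the pieces are not uniformly bounded, but the separation condition in Corollary \ref{CoriDi} ("arbitrarily ever further from each other") can be arranged to dominate the separation in Theorem \ref{ThmDra}, and conversely the identity map between the two disjoint unions is a coarse equivalence since on each individual piece it is an isometry and the between-piece distances in both go to infinity with the piece index. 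Concretely, I would just invoke Corollary \ref{CoriDi} to embed $S$ (with the Dranishnikov metric, or one coarsely equivalent to it) isometrically — or at worst coarsely — into $\iDi$.

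Putting this together: suppose for contradiction that $\iDi$ coarsely embeds into a Hilbert space $\mathcal H$. By the previous paragraph there is a coarse embedding $S \hookrightarrow \iDi$, and composing with $\iDi \hookrightarrow \mathcal H$ yields a coarse embedding $S \hookrightarrow \mathcal H$ (composition of coarse embeddings is a coarse embedding, with the control functions composed appropriately). This contradicts Theorem \ref{ThmDra}. Hence $\iDi$ does not coarsely embed into Hilbert space.

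The only delicate point — and the one I would state carefully rather than wave at — is matching the coarse disjoint union structure: one has to make sure that the particular between-piece distance bound demanded in Theorem \ref{ThmDra} can be met (or exceeded) by the construction in Corollary \ref{CoriDi}, and that shrinking or enlarging those between-piece distances does not destroy coarse equivalence given that the individual pieces $(\ZZ_n)^m$ have unbounded diameters as $m,n\to\infty$. This is handled by the standard fact that for a countable family of finite metric spaces, any two "coarse disjoint union" metrics in which the cross-distances tend to infinity along the enumeration are coarsely equivalent; I would cite the remark preceding Theorem \ref{ThmDra} and Corollary \ref{CoriDi} and spell out the control functions in one line. Everything else is a routine chaining of coarse embeddings.
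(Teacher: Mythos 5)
Your argument is correct and is essentially the paper's proof: the paper likewise deduces the theorem from Theorem \ref{ThmDra} together with Corollary \ref{CoriDi} applied to the coarse disjoint union of the spaces $((\ZZ_n)^m, d_\infty)$. Your extra care about matching the two coarse disjoint union metrics is a reasonable elaboration of the same argument, not a different route.
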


\begin{proof}
Follows from Theorem \ref{ThmDra} and  Corollary \ref{CoriDi} for the coarse disjoint union of $((\mathbb{Z}/m)^n, d_\infty)$.
\end{proof}

In particular, the asymptotic dimension of $\iDi$ is not finite.

\begin{Remark}\label{RemBub}
During the completion of the first version of this manuscript a preprint \cite{Bub} was posted which independently presented similar arguments and proved that a space of persistence diagrams on countably many points equipped with the corresponding version of the bottleneck distance does not coarsely embed into Hilbert space. Since the space in \cite{Bub} naturally contains  $\iDi$, the result of \cite{Bub} follows from Theorem \ref{ThmCEH}. A similar argument is also being considered in the context of hyperspaces \cite{Zava}. 
\end{Remark}

\begin{Remark}\label{RemWag}
The "finite determination" characterization of Hilbert space embeddability (Theorem \ref{ThmNow}) can be used to give another argument leading to the coarse  non-embeddability of   $\iDi$. For, if  $\iDi$ were coarsely embeddable in a Hilbert space,  one could take finite subsets of any metric space $Y$ which is not coarsely embeddable in $\ell_2$ (for example any $\ell_p$ with $p > 2$, see \cite{JohnRan}) and (by Lemma  \ref{LemiDi}) considering those  to be finite subsets of  $\iDi$  get the contradictory conclusion that $Y$ is coarsely embeddable in $\ell_2$. Using the same "finite determination" characterization Wagner (\cite{Wag}) showed that  $\pDi$ is not coarsely embeddable in a Hilbert space for $p > 2$, by first showing that  $(\mathbb{R}^N,\lVert \cdot \rVert_p)$ (for arbitrary $N$) isometrically embeds in $\pDi$ and then using a similar argument as above. The following theorem and corollary summarizes the discussion in this remark.
\end{Remark}

\begin{Theorem}\label{ThmFinDet}
Let $(X,d)$ be a metric space that coarsely embeds in a Hilbert space. Let $(Y,D)$ be a metric space whose finite subsets uniformly coarsely embed in $X$, i.e. for $i=1,2$ there are non-decreasing functions $\rho_i:[0,\infty) \to [0, \infty)$  with  $\lim_{t \to \infty} \rho_1(t) = \infty$, such that for every finite subset $A \subset Y$ there exists a map $f_A: A \to X$ satisfying 
$\rho_1(D(y_1,y_2)) \le d( f_A(y_1),f_A(y_2)) \le \rho_2(D(y_1,y_2)) $ for all $y_1,y_2 \in Y$. Then $Y$ coarsely embeds in a Hilbert space.

\begin{Corollary}\label{CorSumFinDet}
\begin{enumerate}
\item Let $(X,d)$ be a metric space such that every finite metric space isometrically embeds in $(X,d)$. Then $(X,d)$ does not coarsely embed in a Hilbert space. In particular, $\iDi$ does not coarsely embed in Hilbert space.
\item Finite subsets of $\ell_p$ uniformly coarsely embed in  $\pDi$. Therefore,  (Wagner, \cite{Wag} Theorem 10) $\pDi$ does not coarsely embed in a Hilbert space for $p>2$.
\end{enumerate}
\end{Corollary}

\end{Theorem}






\begin{thebibliography}{99}

\bibitem{KZ}
K. Austin and \v Z. Virk,
\emph{Higson Compactification and Dimension Raising},
Topology and its Applications 215(2017), 45--57.

\bibitem{Bauer}
 U. Bauer and M. Carriere,
 \emph{On the Metric Distortion of Embedding Persistence Diagrams into separable Hilbert spaces},
  arXiv:1806.06924.
  
\bibitem{BD}
G. Bell and A. Dranishnikov, 
\emph{Asymptotic dimension},
 Topology Appl. 155 (2008), no. 12, 1265--1296.

\bibitem{Bell}
G. Bell, A. Lawson, C.N. Pritchard, and D. Yasaki,
\emph{The space of persistence diagrams fails to have Yu's property A}, arXiv:1902.02288v2.

\bibitem{Bub0}
P. Bubenik and T. Vergili, 
\emph{Topological spaces of persistence modules and their properties},
 arXiv:1802.08117.

\bibitem{Bub}
P. Bubenik and A. Wagner,
\emph{Embeddings of Persistence Diagrams into Hilbert Spaces}, arXiv:1905.05604.


\bibitem{DranGLY}
A.N. Dranishnikov, G. Gong, V. Lafforgue, G. Yu
\emph{Uniform Embeddings  into Hilbert Spaces and a Question of Gromov}, Canad. Math. Bull., 45(1):60--70, 2002.

\bibitem{Edels}
H. Edelsbrunner and J. L. Harer,
\emph{Computational topology, an introduction.}, 
American Mathematical Society, Providence.

\bibitem{Engel}
R. Engelking,
\emph{Theory of Dimensions - Finite and Infinite}, 
Heldermann Verlag.





\bibitem{Gro}
M. Gromov,
\emph{Asymptotic Invariants of Infinite Groups},  Geometric Group Theory, Vol 2., Camb. Univ. Press, 1993.

\bibitem{JohnRan}
W. Johnson and N. Randrianarivony,
\emph{$\ell_p (p> 2)$ does not coarsely embed into a Hilbert space}, Proc. Am. Math. Soc., 2006.






\bibitem{Kas}
D. Kasprowski,
\emph{The Asymptotic Dimension of Quotients of Finite Groups}, Proc. Am. Math. Soc., 2016.

\bibitem{Mil}
Y. Mileyko, S. Mukherjee, and J. Harer,
\emph{Probability measures on the space of persistence diagrams}, 
Inverse Problems, 27(12), 22, 2011.


\bibitem{Now}
P. Nowak,
\emph{Coarse embeddings of metric spaces into Banach spaces}, 
Proc. Am. Math. Soc., 2005.





\bibitem{Roe}
J. Roe,
\emph{Lectures on Coarse Geometry}, 
University lecture Series, Am. Math. Soc., 2003

\bibitem{Shu}
O. Shukel' and M. Zarichnyi,  
\emph{Asymptotic dimension of symmetric powers}, Math. Bulletin of NTSh. 5 (2008)
304--311.

\bibitem{Turner}
K. Turner and G. Spreemann, 
\emph{Same but Different: distance correlations between topological summaries}, arXiv:1903.01051

\bibitem{Zava}
T. Yamauchi, T. Weighill, and N. Zava,
\emph{Coarse infinite-dimensionality of hyperspaces of finite subsets}, in preparation. 

\bibitem{Wag}
A. Wagner,
\emph{Nonembeddability of persistence diagrams with $p > 2$ Wasserstein metric}, arXiv:1910.13935v1 





\bibitem{Yu}
G. Yu, 
\emph{The coarse Baum-Connes conjecture for spaces which admit a uniform embedding into Hilbert
space}, 
Invent. Math. 139 (2000), no. 1, 201--240.

\end{thebibliography}
\end{document}